\allowdisplaybreaks \numberwithin{equation}{section}
\numberwithin{equation}{section}
\newtheorem{theorem}{Theorem}[section]
\newtheorem{proposition}[theorem]{Proposition}
\newtheorem{lemma}[theorem]{Lemma}
\theoremstyle{definition}
\newtheorem{definition}[theorem]{Definition}
\theoremstyle{remark}
\newtheorem{remark}[theorem]{Remark}
\newtheorem{example}[theorem]{Example}
\def\d{\mathrm{d}}
\newcommand{\R}{\mathbb{R}}
\newcommand{\LL}{\mathcal{L}}
\newcommand{\M}{\mathcal{M}}
\newcommand{\T}{\mathcal{T}}
\newcommand{\s}{\mathcal{S}(\R^N)}
\begin{document}

\title[Symmetry in Serrin-type overdetermined problems]{Symmetry in Serrin-type overdetermined problems}

 \author{Daomin Cao, Juncheng Wei, Weicheng Zhan }

\address{State Key Laboratory of Mathematical Sciences, Academy of Mathematics and Systems Science, Chinese Academy of Sciences, Beijing 100190, China}
\email{dmcao@amt.ac.cn}

\address{Department of mathematics, Chinese university of Hong Kong, Shatin, Hong Kong}
\email{jcwei@math.ubc.ca}

\address{School of Mathematical Sciences, Xiamen University, Xiamen 361005, P.R. China}
\email{zhanweicheng@amss.ac.cn}

\begin{abstract}
This paper investigates the geometric constraints imposed on a domain by overdetermined problems for partial differential equations. Serrin’s symmetry results are extended to overdetermined problems with potentially degenerate ellipticity in nonsmooth bounded domains. Furthermore, analogous symmetry results are established for ring-shaped domains. The proof relies on continuous Steiner symmetrization, along with a carefully constructed approximation argument.

\bigskip
\noindent\textbf{Keywords}: Overdetermined problems, Degenerate elliptic operators, Nonsmooth domains, Continuous Steiner symmetrization

\bigskip

\noindent \textbf{Mathematics Subject Classification (2020)}: 35N25, 53C24, 35J70
\end{abstract}

\maketitle


\bibliographystyle{siam}

\section{Introduction and Main results}

In this paper, we investigate how overdetermined problems for partial differential equations constrain the geometry of the domain, a topic with a rich history spanning more than half a century.

\subsection{Serrin's classical result}
Let us begin by recalling the classical Serrin's overdetermined problem
\begin{align}
    -\Delta u=1,&\ \ \ \text{in}\ \Omega, \label{eq1} \\
    u=0,\ \ \ |\nabla u|=\mathbf{c}, &\ \ \ \text{on}\ \partial \Omega,  \label{eq2}
\end{align}
where $\Omega$ is a bounded domain in $\R^N$ with $N\ge 2$, and $\mathbf{c}$ is a constant. It is easy to see that if $\Omega$ is a ball, then \eqref{eq1}-\eqref{eq2} admits a unique solution, which is radially symmetric. An intriguing inverse problem arises, namely whether the following statement holds true:
\begin{equation}\label{eq0}
  \text{if\ \eqref{eq1}-\eqref{eq2}}\ \text{admits a solution, then}\ \Omega\ \text{is a ball}.
\end{equation}
This problem was first addressed by Serrin in his celebrated paper \cite{Ser1971MR333220} (1971). He proved \eqref{eq0}, assuming that $\partial \Omega$ is of class $C^2$ and $u\in C^2(\overline{\Omega})$. As a direct consequence, $u$ can be explicitly represented as a quadratic function. We would like to mention that this result, in the case $N=2$, admits a nice and insightful interpretation in fluid dynamics. Specifically, $u$ can be thought as the velocity of a homogeneous, incompressible fluid flowing steadily along parallel streamlines within a hollow cylindrical pipe, subject to the no-slip boundary condition, with a constant wall shear stress exerted by the fluid on the pipe wall. In this context, Serrin's result asserts that \emph{the only possible configuration of the pipe is a circular cylinder}. For further physical interpretations of this result, see \cite{Sirakov2002} or \S 8.3 in \cite{PucciMR2356201}.

Serrin's proof is based on the so-called \emph{moving plane method}, originally introduced by Alexandrov \cite{AlexandrovMR102114, AlexandrovMR143162} in the context of geometric problems. Right after Serrin’s paper, Weinberger \cite{WeinbergerMR333221} provided an alternative proof of the same result, employing the maximum principle for an auxiliary function, called $P$-function, along with Poho\v{z}aev identity; this method is nowadays known as the \emph{$P$-function approach}. Following the works of Serrin and Weinberger, several alternative proofs of Serrin's result have appeared in the literature; see, e.g., \cite{BrandoliniMR2448319, Bro2016MR3509374, ChoulliMR1626395, PayneMR1021402}. In addition, a substantial body of literature has extended Serrin’s results in various directions, including systems of quasilinear equations, degenerate elliptic equations, exterior domains, annular domains, weakened conditions, and other boundary conditions; see, e.g., \cite{BrandoliniMR2436453, Bro2016MR3509374, BrockMR1947461, ButtazzoMR2764863, CianchiMR2545870, CiraoloMR3959271, FarinaMR2366129, FarinaMR3145008, Figalli2024, FolMR3086464, FragaMR2863764, FraMR2232009, GarofaloMR980297, KumaresanMR1487977, Lian2025, MagnaniniMR4041100, PrajapatMR1487978, ReichelMR1416582, ReichelMR1463801, RosMR4046014, RosMR3062759, RosMR3666566, RuizMR4575796, ShahgholianMR2916825, SirakovMR1808026, TraizetMR3192039, VogelMR1200301, WangMR3952780} and the references therein. We also refer interested readers to the surveys \cite{NitschMR3802818, SchaeferMR1971630, SicbaldiMR4559541} for a comprehensive overview.

\subsection{Main results}
In this paper, we are interested in the following equation
\begin{equation}\label{eq3}
   -\operatorname {div}\left(g(|\nabla u|)\frac{\nabla u}{|\nabla u|}\right)=f(u),\ \ \ \text{in}\ \Omega.
\end{equation}
Equations of this type arise in various problems from different backgrounds. We refer the reader to \cite{FraMR2232009, PucciMR2356201, Sirakov2002} for a relevant introduction. We impose the following assumptions on $f$ and $g$:
\begin{itemize}
  \item[$(\mathcal{A})\ $] The function $g:[0, +\infty)\to \mathbb{R}$ is continuous and strictly increasing, with $g(0)=0$; The function $f:[0, +\infty)\to \R$ admits the decomposition $f = f_1 + f_2$, where $f_1$ is continuous and $f_2$ is a function of bounded variation.
\end{itemize}
A prototypical example is the capillary overdetermined problem, where $g(z)=\frac{z}{\sqrt{1+z^2}}$ represents the mean curvature operator (see, e.g., \cite{Lian2025}). Another prototypical example is the so-called $p$-Laplace equation, where $g(z)=z^{p-1}$ for $p>1$. This operator is singular for $p<2$ and degenerate for $p>2$. In this case, solutions to the equation are typically only of class $C^{1,\alpha}(\Omega)$; see \cite{DiBenedettoMR709038, TolksdorfMR727034}. It is therefore natural to consider solutions to \eqref{eq3} in a weak sense. We shall say that $u\in C^1(\Omega)$ is a weak solution of \eqref{eq3} if
\begin{equation}\label{eq4}
  \int_\Omega g(|\nabla u|)\frac{\nabla u \cdot \nabla \varphi}{|\nabla u|}\,\d x=\int_{\Omega} f(u)\varphi\,\d x,\ \ \ \forall\,\varphi\in C_0^\infty(\Omega).
\end{equation}
The integrand in \eqref{eq4} is interpreted to be zero at each $x$ where $\nabla u(x)=0$.

The primary goal of this paper is to extend Serrin's result to more general partial differential equations of the form \eqref{eq3} in arbitrary bounded domains.

Since we aim to work in a general bounded domain, the boundary conditions \eqref{eq2} must be understood in an appropriate sense. Following \cite{VogelMR1200301}, we reformulate them as follows:
\begin{equation}\label{eq5}
  u(x)\to 0\ \ \ \text{and}\ \ \ |\nabla u(x)|\to \mathbf{c}\ \ \text{uniformly as}\ \ x\to \partial \Omega,
\end{equation}
namely, for any $\varepsilon>0$, there exists an open set $\mathcal{V}\supset \partial \Omega$ such that
\begin{equation}\label{eq6}
  \left|u(x)\right|<\varepsilon,\ \ \ \left||\nabla u(x)|-\mathbf{c} \right|<\varepsilon,\ \ \ \forall\, x\in \mathcal{V} \cap \Omega.
\end{equation}
Note that if $u\in C^1(\overline{\Omega})$, then \eqref{eq5} is identical to \eqref{eq2}.

Our first main result is the following.

\begin{theorem}\label{th1}
  Let $\Omega \subset \R^N$ be an arbitrary bounded domain. Let $f$ and $g$ be two functions satisfying assumption $(\mathcal{A})$. Suppose that $u\in C^1(\Omega)$ is a positive weak solution of \eqref{eq3} in the sense of \eqref{eq4}, and that $u$ satisfies \eqref{eq5} for some $\mathbf{c}>0$. Then, $\Omega$ must be a ball.
\end{theorem}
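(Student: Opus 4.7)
The goal is to show that $\Omega$ is Steiner symmetric with respect to every hyperplane through a common center, which forces $\Omega$ to be a ball. The strategy combines a continuous Steiner symmetrization of $u$ (extended by zero outside $\Omega$) with an approximation procedure designed to cope with the lack of regularity of $\partial\Omega$, of $g$, and of the BV component $f_2$.

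Fix a hyperplane $H\subset\R^N$ and let $\{u_t\}_{t\ge 0}$ denote the continuous Steiner symmetrization of $u$ with respect to $H$, with $u_0=u$ and $u_t\to u^{*}$ as $t\to\infty$. The distribution function of $u_t$ is independent of $t$, so $\int F(u_t)\,\d x$ is constant, where $F(s)=\int_0^s f(\tau)\,\d \tau$. The P\'olya--Szeg\H{o}-type inequality for continuous Steiner symmetrization yields
\[
\int_{\R^N} G(|\nabla u_t|)\,\d x \;\le\; \int_{\R^N} G(|\nabla u|)\,\d x, \qquad G'(z)=g(z),
\]
whose sharp form characterises equality by the Steiner symmetry of $u$ with respect to a translate of $H$. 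Note that $G$ is strictly convex since $g$ is strictly increasing with $g(0)=0$, so the inequality and its equality case are available in principle.

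The key step is to show that equality actually holds along the whole family. For this I would compare $u_t$ with the solution $v_t$ of \eqref{eq3} on $\operatorname{supp}(u_t)$ with zero Dirichlet data via a Talenti-type pointwise rearrangement argument; the overdetermined condition $|\nabla u|\to\mathbf{c}$ on $\partial\Omega$ then supplies the boundary information that forces $u_t=v_t$, hence equality in P\'olya--Szeg\H{o}. Thus $u$ is Steiner symmetric with respect to a translate of $H$. Varying $H$ and verifying (by a standard argument) that the translation centers agree, one concludes that $\Omega$ is a ball.

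The main difficulty is making all of the above rigorous in the present weak setting, and this is where the promised approximation argument comes in. Since $\partial\Omega$ is only controlled via the uniform limits \eqref{eq6}, I would perform the symmetrization on the superlevel sets $\Omega_\tau=\{u>\tau\}$, $\tau>0$: by the implicit function theorem, applied near $\partial\Omega$ where $|\nabla u|$ is uniformly close to the positive constant $\mathbf{c}$, the boundary $\partial\Omega_\tau$ is a $C^1$ hypersurface on which $u=\tau$ and $|\nabla u|\approx \mathbf{c}$. In parallel I would regularise $g$ by smooth strictly increasing $g_k$ and $f$ by mollifying $f_2$, so that standard rearrangement inequalities with clean equality cases apply to the approximating problems. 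The crucial technical step is then to pass to the double limit ($\tau\to 0^+$ and $k\to\infty$) while preserving both the Talenti-type pointwise comparison and the equality-case characterisation of the symmetrization; this is where the BV nature of $f_2$ and the only-uniform boundary condition interact most delicately, and I expect it to be the principal obstacle of the proof.
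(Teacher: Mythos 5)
Your proposal has a genuine conceptual gap at its core. You aim to prove that $u$ is Steiner symmetric with respect to a translate of every hyperplane $H$, which (after verifying the translation centers agree) would make $u$ radially symmetric and radially decreasing. But that conclusion is false under the hypotheses of Theorem \ref{th1}: Example \ref{ex1} exhibits a positive $C^1$ solution of \eqref{eq3}--\eqref{eq4} in the ball $B_6$, with $u=0$ and $|\nabla u|=\text{const.}$ on $\partial B_6$, that is \emph{not} radially symmetric. The reason your plan cannot work is that the equality case in the P\'olya--Szeg\H{o} inequality for \emph{continuous Steiner symmetrization} does not characterise global Steiner symmetry of $u$; it characterises the strictly weaker property of \emph{local symmetry} (Definition \ref{def2-1}), and this is precisely Brock's symmetry criterion that the paper invokes (Proposition \ref{pro2}). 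A locally symmetric $u$ need not be Steiner symmetric in any direction; its super-level sets merely decompose into countably many pairwise disjoint balls/annuli (Proposition \ref{pro1}), which is enough to force the connected open set $\Omega=\{u>0\}$ to be a single ball while leaving $u$ itself non-radial. The correct target is local symmetry of $(u-\gamma)_+$ for every $\gamma>0$, not Steiner symmetry of $u$.

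The Talenti-type comparison step has a related problem and is far from the paper's route. You want to compare $u^t$ with ``the solution $v_t$ of \eqref{eq3} on $\operatorname{supp}(u^t)$ with zero Dirichlet data'' and ``force $u^t=v_t$,'' but under assumption $(\mathcal{A})$ ($f$ only $C^0+BV$, $g$ possibly degenerate) there is no uniqueness, nor even a canonical choice, of positive solution on a given domain, and the overdetermined Neumann information lives on $\partial\Omega$, not on $\partial\operatorname{supp}(u^t)$ for $t>0$; so nothing identifies $u^t$ with a comparison solution. The paper never solves an auxiliary PDE. Instead it tests the weak formulation \eqref{eq4} directly with $(u-\gamma_1)_+^t-(u-\gamma_1)_+$, uses the elementary convexity inequality $G(b)-G(a)\ge g(a)(b-a)$ (Lemma \ref{le2}) to bound $\frac{1}{t}\left(\int G(|\nabla (u-\gamma_1)_+^t|)\,\d x-\int G(|\nabla (u-\gamma_1)_+|)\,\d x\right)$ from below, controls the error terms via coarea-formula estimates that exploit $|\nabla u|\to\mathbf{c}>0$ near $\partial\Omega$ (Lemmas \ref{le3}--\ref{le5}), and then transfers the lower bound from small $\gamma_1$ to an arbitrary $\gamma_0$ via the monotonicity lemma (Lemma \ref{le1}). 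This one-parameter truncation in $\gamma$ replaces your proposed double limit in $(\tau,k)$ and dissolves exactly the obstacle you flag at the end of your sketch without addressing; mollifying $g$ and $f_2$ is never needed.
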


We highlight that Theorem \ref{th1} makes no assumptions on the smoothness of $\partial \Omega$. The solution $u$ may not be radially symmetric in general. For example, in the case of the $p$-Laplacian, where $g(z)=z^{p-1}$ for some $p>1$, nonsymmetric solutions to \eqref{eq3} exist in a ball; see \cite{Broc2000MR1758811, BrockMR1947461}. For the reader’s convenience, we recall the example constructed in \cite{Broc2000MR1758811, BrockMR1947461}:
\begin{example}\label{ex1}
  Let $p\ge 2$, $s>2$,
  \begin{equation*}
    w(x)=\begin{cases}
           (1-|x|^2)^s, & \mbox{if }\ |x|\le1, \\
           0, & \mbox{if }\ |x|>1,
         \end{cases}
  \end{equation*}
  and
  \begin{equation*}
    v(x)=\begin{cases}
           1, & \mbox{if }\ |x|<5, \\
           1-\left((|x|^2-25)/11  \right)^s, & \mbox{if }\ 5\le |x|\le 6.
         \end{cases}
  \end{equation*}
  Let $B_r(x)$ denote the open ball in $\mathbb{R}^N$ centered at $x$ with radius $r>0$, and let $B_r:=B_r(0)$. We choose $x^1, x^2\in B_4$ with $|x^1-x^2|>2$ and set
  \begin{equation*}
    u(x):=v(x)+w(x-x^1)+w(x-x^2),\ \ \ x\in B_6.
  \end{equation*}
  The graph of $u$ consists of three radially symmetric ``mountains", one of which has a ``plateau" at height 1, while the other two are congruent and positioned so that their ``bases" rest on the plateau; see Figure 1 in \cite{Broc2000MR1758811}. After a short computation we see that $u$ satisfies
  \begin{align*}
    -\Delta_p u\equiv -\operatorname {div}\left(|\nabla u|^{p-2} \nabla u\right)=f(u),\ \ \ u>0,&\ \ \ \text{in}\ B_6,  \\
    u=0,\ \ \ |\nabla u|=\frac{12s}{11}, &\ \ \ \text{on}\ \partial B_6,
\end{align*}
where
\begin{equation*}
  f(u):=\begin{cases}
          (2s/11)^{p-1}\left(25+11(1-u)^{1/s} \right)^{(p/2)-1}\left(1-u\right)^{p-(p/s)-1}\times\ \  &  \\
          \times \left\{(50/11)(p-1)(s-1)+(2ps-2s-p+n)(1-u)^{1/s} \right\} , & \mbox{if }\ 0\le u\le 1, \\
          (2s)^{p-1}\left(1-(u-1)^{1/s}\right)^{(p/2)-1}(u-1)^{p-(p/s)-1}\times\ \ \  & \\
          \times \left\{-2(s-1)(p-1)+(2ps-2s-p+n)(u-1)^{1/s} \right\},\ \ \  & \mbox{if }\ 1\le u\le 2.
        \end{cases}
\end{equation*}
If $p=2$ and $s>2$, then we have $f\in C^{\infty}\left([0, 2]\backslash\{1\} \right)\cap C^{1-(2/s)}\left([0, 2]\right)$. If $p>2$ and $s>p/(p-2)$, then we have $f\in C^1\left([0, 2]\right)$.
\end{example}

Naturally, under suitable additional conditions, it can be further established that $u$ is radially symmetric; see, e.g., \cite{PucciMR2356201, Sirakov2002, VogelMR1200301}. Observe also that the constant $\mathbf{c}$ is assumed to be positive. This condition is imposed mainly to prevent $u$ from degenerating near the boundary. It also plays a crucial role in \cite{VogelMR1200301}, where it is used to show that $u$ is comparable to the distance to $\partial \Omega$. As noted in \cite{FolMR3086464}, the degenerate case $\mathbf{c}=0$ arises in various contexts and is therefore of considerable importance. Our second main result specifically addresses this case. In fact, similar results can still be established for $\mathbf{c}=0$ under assumptions slightly stronger than those in \eqref{eq6}. Specifically, we require that for any $\varepsilon>0$, there exists an open set $\mathcal{V}\supset \partial \Omega$ such that
\begin{equation}\label{eq7}
  \left|u(x)\right|<\varepsilon,\ \ \ 0<|\nabla u(x)|<\varepsilon,\ \ \ \forall\, x\in \mathcal{V} \cap \Omega.
\end{equation}
Note that in \eqref{eq7}, the condition requiring $|\nabla u|$ to remain nonvanishing near the boundary $\partial \Omega$ is imposed to prevent potential severe degeneracy in this region.

Our second main result is as follows, serving as a complement to Theorem \ref{th1}.
\begin{theorem}\label{th2}
  Let $\Omega \subset \R^N$ be an arbitrary bounded domain. Let $f$ and $g$ be two functions satisfying assumption $(\mathcal{A})$. Suppose that $u\in C^1(\Omega)$ is a positive weak solution of \eqref{eq3} in the sense of \eqref{eq4}, and that $u$ satisfies \eqref{eq7}. Then, $\Omega$ must be a ball.
\end{theorem}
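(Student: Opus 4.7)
\textbf{Proof proposal for Theorem \ref{th2}.}

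The plan is to approximate the degenerate boundary condition \eqref{eq7} by a family of non-degenerate problems on sublevel sets of $u$, so as to reduce the matter to a quantitative version of Theorem \ref{th1}.

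For each small $\delta>0$, set $\Omega_\delta:=\{x\in\Omega:u(x)>\delta\}$ and $v_\delta:=u-\delta$ on $\Omega_\delta$. The nonvanishing of $|\nabla u|$ in a neighborhood of $\partial\Omega$ assumed in \eqref{eq7}, combined with the implicit function theorem, ensures that for $\delta$ sufficiently small the level set $\partial\Omega_\delta=\{u=\delta\}$ is a $C^1$ hypersurface compactly contained in $\Omega$. Moreover, $v_\delta$ is a positive $C^1$ weak solution of \eqref{eq3} in $\Omega_\delta$ vanishing on $\partial\Omega_\delta$. Setting $\mathbf{c}_\delta:=\min_{\partial\Omega_\delta}|\nabla u|$ and $\mathbf{C}_\delta:=\max_{\partial\Omega_\delta}|\nabla u|$, the uniform condition $|\nabla u|\to 0$ in \eqref{eq7} yields $0<\mathbf{c}_\delta\le \mathbf{C}_\delta$ and $\mathbf{C}_\delta\to 0$ as $\delta\to 0$; in particular the boundary oscillation $\mathbf{C}_\delta-\mathbf{c}_\delta$ vanishes in the limit.

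I would then rerun the continuous Steiner symmetrization (CSS) argument underlying Theorem \ref{th1} on $v_\delta$ inside $\Omega_\delta$. The subtle point is that $|\nabla v_\delta|$ is not exactly constant on $\partial\Omega_\delta$, so Theorem \ref{th1} does not apply off the shelf. My plan is to produce a \emph{quantitative} refinement of the CSS rigidity in Theorem \ref{th1}, in which the defect from full Steiner symmetry is controlled by the boundary oscillation $\mathbf{C}_\delta-\mathbf{c}_\delta$. Such a bound should come out of a careful revisiting of the Pohozaev-type boundary integral driving the rigidity in Theorem \ref{th1}, where the constant $\mathbf{c}$ is replaced by the variable $|\nabla v_\delta|$ and the resulting extra oscillatory term is treated as a controlled error. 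Passing to the limit $\delta\to 0$ and using $\mathbf{C}_\delta-\mathbf{c}_\delta\to 0$, one obtains that $u$ coincides with its Steiner symmetrization in every direction; the Hausdorff convergence $\Omega_\delta\to\Omega$ (again a consequence of \eqref{eq7}) then transfers the ball property from the $\Omega_\delta$ to $\Omega$ itself.

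The main obstacle is precisely this quantitative rigidity step: Theorem \ref{th1} is stated with the rigid identity $|\nabla u|\equiv\mathbf{c}$ on the boundary, and its CSS/Pohozaev proof presumably exploits that constancy in an essential way. Rewriting the argument to tolerate a boundary-gradient oscillation of size $\mathbf{C}_\delta-\mathbf{c}_\delta$, while still recovering full symmetry in the limit, is what I expect to be the "carefully constructed approximation argument" advertised in the abstract. Some additional technical care will be needed because of the low regularity of $f$ permitted by $(\mathcal{A})$ — only $f_1+f_2$ with $f_2$ merely of bounded variation — but this can presumably be absorbed by a parallel mollification of $f$ followed by a compactness passage, in parallel with the treatment of $f$ in Theorem \ref{th1}.
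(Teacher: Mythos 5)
Your truncation at a small level $\delta$ is the right starting intuition, and the paper does indeed truncate at low levels — but the mechanism you propose for closing the argument is not what the paper does, and as written it contains a genuine gap that you yourself flag as the "main obstacle."

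Two issues. First, you characterize the rigidity in Theorem~\ref{th1} as coming from a ``Pohozaev-type boundary integral'' whose error you hope to bound by the boundary-gradient oscillation $\mathbf{C}_\delta-\mathbf{c}_\delta$. The paper's proof of Theorem~\ref{th1} is not of Pohozaev/Weinberger type at all: it rests on Brock's CStS criterion (Proposition~\ref{pro2}), which asks one to verify
\[
\int_{\R^N}G\bigl(|\nabla (u-\gamma_0)_+^t|\bigr)\,\d x-\int_{\R^N}G\bigl(|\nabla (u-\gamma_0)_+|\bigr)\,\d x = o(t),
\]
and the constancy of $|\nabla u|$ on $\partial\Omega$ never enters as such; there is no boundary identity to make approximate. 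Consequently a ``quantitative version of Theorem~\ref{th1} with error $\mathbf{C}_\delta-\mathbf{c}_\delta$'' is the wrong target. Second, and this is the real gap, the property of $|\nabla u|$ that the degenerate case actually exploits is its \emph{smallness} near $\partial\Omega$, not the smallness of its oscillation. The paper defines $h(z)=zg(z)-G(z)$ and shows (Lemma~\ref{le4}, Case 2) that the difference $\int_{\{u^t>\gamma\}}h(|\nabla u|)-\int_{\{u>\gamma\}}h(|\nabla u|)$ is $\le\varepsilon t$ for small $\gamma$, precisely because $h(|\nabla u|)$ is uniformly small on the symmetric difference $\{u^t>\gamma\}\bigtriangleup\{u>\gamma\}\subset\{\gamma/2\le u\le 2\gamma\}$ where $|\nabla u|<\varepsilon$. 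Combined with the $L^1$ boundary bound of Lemma~\ref{le3} (a straight divergence-theorem estimate on level sets, not Pohozaev) and the Monotonicity Lemma~\ref{le1} — which lets one reduce the CStS defect at a fixed truncation level $\gamma_0$ to that at any smaller $\gamma_1$ — this yields the $o(t)$ bound with no need to know anything about the oscillation of $|\nabla u|$ on level sets, nor to establish that each $\Omega_\delta$ is ``nearly a ball'' and pass to a Hausdorff limit. Your proposal defers the essential estimate to an unproved quantitative-rigidity lemma, and the route through oscillation control and Hausdorff convergence of approximate balls is both harder and unnecessary. What is missing in your sketch is exactly the $h$-function cancellation and the monotonicity-in-$\gamma$ reduction; once those are in place, the degenerate case $\mathbf c=0$ goes through by the same template as $\mathbf c>0$, with Case~2 of Lemma~\ref{le4} replacing Case~1.

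Your closing remark about mollifying $f$ is also off the paper's route: the assumption $(\mathcal A)$ is handled directly via Jordan decomposition of the BV part of $f$ (Lemma~\ref{le5}), with monotone pieces treated by a sign argument and the continuous piece by uniform continuity, rather than by mollification and compactness.
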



Let us now briefly review some relevant works. In 1989, Garofalo and Lewis \cite{GarofaloMR980297} successfully extended Weinberger's argument (i.e., the $P$-function approach) to more general partial differential equations of the form \eqref{eq3} in a general bounded domain, assuming $f\equiv 1$ and imposing additional growth conditions on $g$. Under the same growth conditions on $g$, Brock and Henrot \cite{BrockMR1947461} established similar symmetry results using continuous Steiner symmetrization and the domain derivative, initially assuming that $\Omega$ is convex. Fragal\`a, Gazzola, and Kawohl \cite{FraMR2232009} removed these growth conditions and provided a simpler, more geometric proof. However, the proof in \cite{FraMR2232009} required $\partial \Omega\in C^{2, \alpha}$ and an additional starshapedness assumption on $\Omega$ when the dimension $N$ of $\Omega$ is greater than $2$. The starshapedness assumption was later removed by Farina and Kawohl in \cite{FarinaMR2366129} using a modified $P$-function approach. We would like to point out the $P$-function approach essentially relies on the fact that the right-hand side of \eqref{eq3} is a constant (i.e., $f\equiv \text{const.}$); see \cite{GazzolaMR2257026} for further discussion on this point. A totally different line of reasoning was pursued in \cite{Bro2016MR3509374}, where Brock established an analogue of Theorem \ref{th1} using continuous Steiner symmetrization (see \cite{Bro1995MR1330619, Broc2000MR1758811}). Further efforts have been made to extend Serrin's original results in \cite{Ser1971MR333220} to nonsmooth domains; see, e.g., \cite{CortMR1387457, Figalli2024, FolMR3086464, PrajapatMR1487978, VogelMR1200301}. We recommend that the reader refer to the recent works \cite{Figalli2024, FolMR3086464} for a comprehensive discussion on this topic. Remarkably, combining Weinberger’s method with tools from geometric measure theory, Figalli and Zhang \cite{Figalli2024} succeeded in establishing \eqref{eq0}, where \eqref{eq1}-\eqref{eq2} are both understood in the weak sense on domains of finite perimeter with a uniform upper bound on the density. However, their approach appears to rely heavily on the specific form of \eqref{eq1}, and thus does not seem applicable to the potentially degenerate elliptic equation \eqref{eq3}. It would be of interest to determine whether the boundary conditions considered here can be relaxed to match those in \cite{Figalli2024}.

The second goal of this paper is to further extend the above results to the so-called ring-shaped domain.
\begin{definition}[Ring-shaped domain]\label{def1}
  Let $\Omega_0, \Omega_1$ be two bounded domains in $\mathbb{R}^N$ such that $\overline{\Omega_1}\subset \Omega_0$. If $\Omega:=\Omega_0\backslash \overline{\Omega_1}$ is connected and satisfies $\partial \Omega=\partial \Omega_0\cup \partial \Omega_1$, then it is called a \emph{ring-shaped domain}.
\end{definition}
\noindent For a ring-shaped domain $\Omega=\Omega_0\backslash \overline{\Omega_1}$, the overdetermined conditions are typically given as follows:
\begin{align}
    u=\eta,\ \ \ \partial_{\nu}u=\mathbf{c}_1,&\ \ \ \text{on}\ \partial\Omega_1, \label{eq11} \\
    u=0,\ \ \ \partial_{\nu}u=\mathbf{c}_0, &\ \ \ \text{on}\ \partial \Omega_0,  \label{eq12}
\end{align}
where $\nu$ denotes the \emph{inner} unit normal to $\Omega$, and $\eta$, $\mathbf{c}_0$, and $\mathbf{c}_1$ are constants. In recent years, there has been tremendous interest in the study of overdetermined problems in ring-shaped domains; see, e.g., \cite{Agostiniani2024, BorghiniMR4431667, CavallinaMR4340795, Enci2023, KamburovMR4200475, KhavinsonMR2174103, LeeMR4566201, ReichelMR1416582, RuizMR4575796, SirakovMR1808026, WillmsMR1289661} and the references therein. In \cite{ReichelMR1416582}, Reichel studied the semilinear elliptic equation
\begin{equation}\label{eq13}
  -\Delta u=f(u),
\end{equation}
subject to the overdetermined conditions \eqref{eq11}-\eqref{eq12}. Under the additional assumption that $0<u<\eta$ in $\Omega$, he proved that $\Omega$ must be a standard annulus and that $u$ is radially symmetric. Several years later, Sirakov \cite{SirakovMR1808026} successfully relaxed this assumption and further proved that the conclusion remains valid even when allowing different values of $\eta$ and $\mathbf{c}_1$ on different connected components of $\partial \Omega_1$. Recently, Ruiz \cite{RuizMR4575796} extended Reichel's results by allowing the nonlinearity $f$ in \eqref{eq13} to be merely continuous at the endpoints and, under additional assumptions, covering the degenerate case where $\mathbf{c}_i$ vanishes, $i=0, 1$  (see also \cite{Drivas2024, Wang2023} for related results and applications).

By employing bifurcation theory, researchers have constructed various exceptional ring-shaped domains arising in the study of overdetermined problems. In \cite{KamburovMR4200475}, Kamburov and Sciaraffia constructed a bounded real-analytic ring-shaped domain $\Omega$, distinct from a standard annulus, in which the overdetermined problem \eqref{eq1}, \eqref{eq11}, and \eqref{eq12} admits a solution $u\in C^\infty(\overline{\Omega})$ with $\eta>0$ and $\mathbf{c}_1=\mathbf{c}_0>0$. Recently, Agostiniani, Borghini and Mazzieri \cite{Agostiniani2024} demonstrated the existence of infinitely many planar ring-shaped domains $\Omega$, also distinct from a standard annulus, where the same overdetermined problem admits a solution $u\in C^\infty(\overline{\Omega})$ with $\eta=0$. Furthermore, they proved that if $u$ has infinitely many maximum points, then $\Omega$ must be a standard annulus and that $u$ is radially symmetric. Very recently, Enciso, Fernández, Ruiz, and Sicbaldi \cite{Enci2023} considered equation \eqref{eq13} with $f(z)=\lambda z$ for some $\lambda\in \R$, subject to the overdetermined conditions \eqref{eq11}–\eqref{eq12}, with $\eta>0$ and $\mathbf{c}_1=\mathbf{c}_0=0$. It is worth mentioning that this overdetermined problem is closely related to the well-known Schiffer conjecture; see \cite{KawohlMR4205793} for further discussion on this conjecture. They successfully constructed a family of non-symmetric planar ring-shaped domains $\Omega$ for which the overdetermined problem admit a nontrivial solution.

To the best of our knowledge, no established results are currently available on overdetermined problems involving potentially degenerate ellipticity in ring-shaped domains. Our next objective is to extend Theorems \ref{th1} and \ref{th2} to ring-shaped domains, under the assumption that $0<u<\eta$ in $\Omega$, as in \cite{ReichelMR1416582}. Note that consistency requires $\mathbf{c}_1\le 0\le \mathbf{c}_0$. Analogous to \eqref{eq6} and \eqref{eq7}, we introduce the following weak form of the overdetermined conditions \eqref{eq11}-\eqref{eq12}: For any $\varepsilon>0$, there exist open sets $\mathcal{V}_1\supset \partial \Omega_1$ and $\mathcal{V}_0\supset \partial \Omega_0$, such that
\begin{equation}\label{eq14}
\begin{split}
     &  \left|u(x)-\eta\right|<\varepsilon\ \ \text{and}\ \ \left||\nabla u(x)|+\mathbf{c}_1 \right|<\varepsilon\ \text{if}\ \mathbf{c}_1\not=0;\ 0<|\nabla u(x)|<\varepsilon\ \text{if}\ \mathbf{c}_1=0,\ \ \forall\, x\in \mathcal{V}_1 \cap \Omega; \\
     & \left|u(x)\right|<\varepsilon\ \ \text{and}\ \ \left||\nabla u(x)|-\mathbf{c}_0 \right|<\varepsilon\ \text{if}\ \mathbf{c}_0\not=0;\ 0<|\nabla u(x)|<\varepsilon\ \text{if}\ \mathbf{c}_0=0,\ \ \forall\, x\in \mathcal{V}_0 \cap \Omega. \\
\end{split}
\end{equation}
Clearly, if $\partial \Omega$ is smooth, $u\in C^1(\overline{\Omega})$, and $0\le u \le \eta$ in $\Omega$, then \eqref{eq14} coincides with \eqref{eq11}-\eqref{eq12}.

Our last result is stated as follows.
\begin{theorem}\label{th3}
  Let $\Omega=\Omega_0\backslash \overline{\Omega_1}$ be an arbitrary ring-shaped domain as defined in Definition \ref{def1}. Let $f$ and $g$ be two functions satisfying assumption $(\mathcal{A})$. Let $\eta>0$ and $\mathbf{c}_1\le 0\le \mathbf{c}_0$. Suppose $u\in C^1(\Omega)$ is a weak solution of \eqref{eq3} in the sense of \eqref{eq4}, and that $u$ satisfies $0<u<\eta$ in $\Omega$ and \eqref{eq14}. Then, $\Omega_0$ and $\Omega_1$ must be balls.
\end{theorem}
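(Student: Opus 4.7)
The plan is to extend the continuous Steiner symmetrization strategy used for Theorems \ref{th1} and \ref{th2} to the annular setting by working with the natural bi-level structure of $u$. Since $0<u<\eta$ in $\Omega$ with $u\to 0$ on $\partial\Omega_0$ and $u\to\eta$ on $\partial\Omega_1$, I would first extend $u$ by setting $u\equiv\eta$ on $\overline{\Omega_1}$, obtaining a continuous function on $\Omega_0$ whose superlevel sets $\{u>t\}$ for $t\in(0,\eta)$ are bounded open sets sandwiched between $\overline{\Omega_1}$ and $\Omega_0$. The geometric target is then to prove that \emph{every} such superlevel set is radially symmetric about a common center, which together with \eqref{eq14} forces both $\partial\Omega_0$ and $\partial\Omega_1$ to be concentric spheres.

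First, I would carry out the same kind of inner-approximation argument as in the proofs of Theorems \ref{th1} and \ref{th2}: for small $\delta>0$ replace $\Omega_0$ by $\Omega_0^\delta:=\{u>\delta\}\cup\overline{\Omega_1}$ and $\Omega_1$ by $\Omega_1^\delta:=\{u>\eta-\delta\}$. By \eqref{eq14}, for almost every sufficiently small $\delta$ the level sets $\{u=\delta\}$ and $\{u=\eta-\delta\}$ are regular $C^1$ hypersurfaces on which $|\nabla u|$ is bounded away from zero, so that $u$ restricted to $\Omega^\delta:=\Omega_0^\delta\setminus\overline{\Omega_1^\delta}$ is a classical solution of \eqref{eq3} on a smooth ring-shaped domain with overdetermined data converging to $\mathbf{c}_0,\mathbf{c}_1$ as $\delta\to 0$.

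Second, fixing a unit vector $e\in S^{N-1}$, I would apply Brock's continuous Steiner symmetrization \cite{Bro1995MR1330619, Bro2016MR3509374} to each superlevel set of $u$ on $\Omega^\delta$, producing a one-parameter family $u_\delta^\tau$ whose level sets are those of $u|_{\Omega^\delta}$ symmetrized in direction $e$ at time $\tau\ge 0$. The Pólya--Szegő inequality for the Young function $G(z):=\int_0^z g(s)\,\d s$ yields
\begin{equation*}
\int_{\Omega^{\delta,\tau}}G(|\nabla u_\delta^\tau|)\,\d x \le \int_{\Omega^\delta}G(|\nabla u|)\,\d x,
\end{equation*}
with equality iff each level set is a rigid translate along $e$. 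On the other hand, testing the weak form \eqref{eq4} against $u$ and incorporating the two prescribed normal derivatives on $\partial\Omega_0^\delta$ and $\partial\Omega_1^\delta$ yields a Pohozaev-type integral identity showing that the energy on the right is in fact $\tau$-independent. Equality in Pólya--Szegő then forces Steiner symmetry in direction $e$, and letting $e$ range over $S^{N-1}$ gives radial symmetry of $u$ on $\Omega^\delta$ about some center $x_\delta$. Sending $\delta\to 0$ and invoking \eqref{eq14} concludes the proof.

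The principal obstacle will arise in the degenerate cases $\mathbf{c}_0=0$ or $\mathbf{c}_1=0$, where Vogel's distance comparison \cite{VogelMR1200301} no longer applies directly to the boundary component in question: the extra condition $0<|\nabla u|<\varepsilon$ built into \eqref{eq14} must be exploited carefully to ensure that, for almost every small $\delta$, the level surfaces $\{u=\delta\}$ and $\{u=\eta-\delta\}$ remain $C^1$ hypersurfaces converging in Hausdorff distance to $\partial\Omega_0$ and $\partial\Omega_1$, so that the approximation scheme of Theorem \ref{th2} can be run \emph{simultaneously} on both boundaries. A secondary difficulty is to align the centers $x_\delta$ obtained for different directions $e$ and different parameters $\delta$; this should be resolved by the usual compactness/uniqueness argument using the constancy of $u$ and the limit values of $|\nabla u|$ on each boundary component.
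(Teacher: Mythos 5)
Your plan aims at the wrong target and uses a variational equality that does not hold. The theorem does \emph{not} assert that $\Omega_0$ and $\Omega_1$ are concentric, and indeed the paper exhibits (in the example following Theorem \ref{th3}) an explicit $p$-Laplacian solution on a nonconcentric ring $B_6\setminus \overline{B_{1/2}(x^1)}$. Your stated ``geometric target'' — that every superlevel set is radially symmetric about a \emph{common} center, hence $\partial\Omega_0$ and $\partial\Omega_1$ are concentric spheres — is therefore false in general and cannot be the goal of the proof. Relatedly, the plan of ``letting $e$ range over $S^{N-1}$ gives radial symmetry of $u$'' proves too much: for the class of nonlinearities allowed by $(\mathcal{A})$ the solution need not be radial. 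The correct intermediate notion is Brock's \emph{local symmetry} (Definition \ref{def2-1}), which only forces each superlevel set to be a countable disjoint union of balls (Proposition \ref{pro1}), and connectedness of $\Omega_0$, respectively nestedness of the balls in $\{u>1-1/n\}$, then yields that $\Omega_0$ and $\Omega_1$ are balls without any concentricity.

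A second gap is the claimed ``Pohozaev-type integral identity showing that the energy on the right is in fact $\tau$-independent.'' The continuously Steiner-symmetrized functions $u_\delta^\tau$ are not solutions of \eqref{eq3} for $\tau>0$, so testing \eqref{eq4} against $u$ gives you no control of $\int G(|\nabla u_\delta^\tau|)$; there is no reason this quantity is constant in $\tau$, and if it were, equality in Pólya--Szegő would give global Steiner symmetry, which, as above, is false. The paper's actual mechanism is one-sided and infinitesimal: one tests \eqref{eq4} against the \emph{difference} $\bigl(\mathcal{T}_{\gamma_1}^{\beta_1}[u]\bigr)^t-\mathcal{T}_{\gamma_1}^{\beta_1}[u]$, combines this with the convexity inequality of Lemma \ref{le7}, the level-set estimates of Lemmas \ref{le8}--\ref{le9}, and the lower bound on the source term from Lemma \ref{le10}, to show that the $t$-derivative of $\int G(|\nabla (\mathcal{T}_{\gamma_0}^{\beta_0}[u])^t|)$ at $t=0^+$ is $\ge -\varepsilon$ for every $\varepsilon$; together with the upper bound \eqref{6-1} this gives a vanishing derivative, and Proposition \ref{pro2} (Brock's criterion) then yields local symmetry of each truncation $\mathcal{T}_{\gamma_0}^{\beta_0}[u]$, hence of $u$. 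Also note the truncation is a two-sided cut $\mathcal{T}_\gamma^\beta[u]=\min\{\beta,(u-\gamma)_+\}$ applied to $u$, not a replacement of the domain by $\Omega^\delta$; together with the monotonicity in $(\beta,\gamma)$ of Lemma \ref{le6}, this is what makes both boundary components tractable simultaneously. Your idea of approximating both boundaries at once is in the right spirit, but it must be implemented through the bi-level truncation and the local-symmetry criterion rather than through a Pohozaev equality and global Steiner symmetry.
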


It is important to note that the above conclusion does not assert that $\Omega_1$ and $\Omega_0$ are concentric, i.e., that $\Omega$ is a standard annulus. In fact, as the following example illustrates, $\Omega$ may be \emph{nonconcentric}, showing that the conclusion is already optimal.

\begin{example}
  Let $w$ and $v$ be as in Example \ref{ex1}. Let $x^1$ be an arbitrary point in $B_2$. Let
  \begin{equation*}
    u(x)=v(x)+w(x-x^1),\ \ \ x\in B_6.
  \end{equation*}
  We check that $u$ satisfies
  \begin{align*}
    -\Delta_p u\equiv -\operatorname {div}\left(|\nabla u|^{p-2} \nabla u\right)=f(u),\ \ \ u>0,&\ \ \ \text{in}\ B_6\backslash \overline{B_{\frac{1}{2}}(x^1)},  \\
    u=(3/4)^s,\ \ \ |\nabla u|=s(3/4)^{s-1}, &\ \ \ \text{on}\ \partial B_{\frac{1}{2}}(x^1), \\
     u=0,\ \ \ |\nabla u|=\frac{12s}{11}, &\ \ \ \text{on}\ \partial B_6,
\end{align*}
where $f$ is given as in Example \ref{ex1}.
\end{example}

We note that if $\Omega_i$ (for $i = 1$ or $2$) is known \emph{a priori} to be a ball, then the Neumann boundary condition for $u$ on $\partial\Omega_i$ becomes superfluous and may be omitted; see Remark \ref{remark-4} in Section \ref{s4} for further details. Similar results have also been discussed in \cite{ReichelMR1416582, SirakovMR1808026}.

\subsection{Idea of the proof}
The central idea of the proof is that it suffices to establish the local symmetry of $u$, as introduced by Brock \cite{Broc2000MR1758811} (see Section \ref{s2} below for the precise definition). Local symmetry is a concept weaker than global symmetry, yet it is sufficient for our purposes. To achieve this, we employ the continuous Steiner symmetrization method, developed by Brock \cite{Bro1995MR1330619, Broc2000MR1758811, Bro2016MR3509374}. A major difficulty arises from the fact that the domain is a priori unknown, necessitating a suitable truncation and approximation procedure. In \cite{Bro2016MR3509374}, Brock proposed an approximation method based on a carefully designed truncation near the boundary. However, his argument appears to rely crucially on the assumption $\mathbf{c}>0$ and does not directly extend to the degenerate case $\mathbf{c}=0$. Inspired by \cite{Bro2016MR3509374}, we develop an alternative approximation scheme that appears to be more concise and effective. The key insight is to exploit a certain monotonicity property; see Lemmas \ref{le1} and \ref{le6} for details. In addition, several technical estimates are required to complete the argument.
\subsection{Organization of the paper}
The rest of the paper is organized as follows: In Section \ref{s2}, we present some preliminary results that will be used in the proofs. Section \ref{s3} is devoted to proving Theorems \ref{th1} and \ref{th2}. In Section \ref{s4}, we provide the proof of Theorem \ref{th3}.

\section{The continuous Steiner symmetrization}\label{s2}

Our main tool in the proof is the so-called continuous Steiner symmetrization method, developed by Brock \cite{Bro1995MR1330619, Broc2000MR1758811, Bro2016MR3509374} (see the survey \cite{Bro2007MR2569330} for a comprehensive introduction to this topic). For reader's convenience, we provide a concise overview of this method in the present section. We remark that the form presented below is the one we will use; the original result holds in more general settings.

\subsection{The continuous Steiner symmetrization}
We will follow the presentation in \cite{Bro2016MR3509374}. Let us start with some notation. Let $\mathcal{L}^N$ denote $N$-dimensional Lebesgue measure. By $\mathcal{M}(\R^N)$ we denote the family of Lebesgue measurable sets in $\R^N$ with finite measure. For a function $u:\R^N\to \R$, let $\{u>a\}$ and $\{b\ge u>a\}$ denote the sets $\left\{x\in \R^N: u(x)>a \right\}$ and $\left\{x\in \R^N: b\ge u(x)>a \right\}$, respectively, ($a, b\in \R$, $a<b$). Let $\mathcal{S}(\R^N)$ be the set of real-valued, nonnegative measurable functions $u$ that satisfy
\begin{equation*}
  \LL^N(\{u>c\})<+\infty,\ \ \forall\, c>0.
\end{equation*}
We first recall the definition of classical Steiner symmetrization; see, for example, \cite{Bro2007MR2569330, Kaw1985MR810619, Lie2001MR1817225, TalentiMR3503198}.
\begin{definition}[Steiner symmetrization]
\ \ \
  \begin{itemize}
    \item [(i)]For any set $M\in \M(\R)$ let
    \begin{equation*}
      M^*:=\left(-\frac{1}{2}\LL^1(M),\  \frac{1}{2}\LL^1(M)\right).
    \end{equation*}
    \item [(ii)]Let $M\in \M(\R^N)$. For every $x'\in \R^{N-1}$ let
    \begin{equation*}
      M(x'):=\left\{x_1\in \R: (x_1, x')\in M \right\}.
    \end{equation*}
    The set
    \begin{equation*}
      M^*:=\left\{ x=(x_1, x'): x_1\in \left(M(x') \right)^*, x'\in \R^{N-1}\right\}.
    \end{equation*}
    is called the Steiner symmetrization of $M$ (with respect to $x_1$).
    \item [(iii)]If $u\in \s$, then the function
    \begin{equation*}
      u^*(x):=\begin{cases}
                \sup \left\{c>0: x\in \left\{u>c \right\}^*\right\}, & \mbox{if }\  x\in \bigcup_{c>0} \left\{u>c \right\}^*, \\
                0, & \mbox{if }\  x\not\in \bigcup_{c>0} \left\{u>c \right\}^*,
              \end{cases}
    \end{equation*}
   is called the Steiner symmetrization of $u$ (with respect to $x_1$).
  \end{itemize}
\end{definition}

\begin{definition}[Continuous symmetrization of sets in $\M(\R)$]
  A family of set transformations
  \begin{equation*}
    \T_t:\  \M(\R)\to \M(\R),\ \ \  0\le t\le +\infty,
  \end{equation*}
is called a continuous symmetrization on $\R$ if it satisfies the following properties: ($M, E\in \M(\R)$, $0\le s, t\le +\infty$)
\begin{itemize}
  \item [(i)]Equimeasurability property:\, $\LL^1(\T_t(M))=\LL^1(M)$,

    \smallskip
  \item [(ii)]Monotonicity property:\, If $M\subset E$, then $\T_t(M)\subset \T_t(E)$,

    \smallskip
  \item [(iii)]Semigroup property:\, $\T_t(\T_s(M))=\T_{s+t}(M)$,

    \smallskip
  \item [(iv)]Interval property:\, If $M$ is an interval $[x-R,\ x+R]$, ($x\in \R$, $R>0$), then $\T_t(M):=[xe^{-t}-R,\ xe^{-t}+R]$,

    \smallskip
  \item [(v)]Open/compact set property: If $M$ is open/compact, then $\T_t(M)$ is open/compact.
\end{itemize}
\end{definition}

For the construction of the family $\T_t$, $0 \le t \le +\infty$, we refer the reader to \cite[Theorem 2.1]{Broc2000MR1758811}.

\begin{definition}[Continuous Steiner symmetrization (CStS)]\label{csts}
  \ \ \
  \begin{itemize}
    \item [(i)]Let $M\in \M(\R^N)$. The family of sets
    \begin{equation*}
      \T_t(M):=\left\{x=(x_1, x'): x_1\in \T_t(M(x')), x'\in \R^{N-1} \right\},\ \ \ 0\le t\le +\infty,
    \end{equation*}
    is called the continuous Steiner symmetrization (CStS) of $M$ (with respect to $x_1$).
    \item [(ii)]Let $u\in \s$. The family of functions $\T_t(u)$, $0\le t \le +\infty$, defined by
    \begin{equation*}
      \T_t(u)(x):=\begin{cases}
                \sup \left\{c>0: x\in \T_t\left(\left\{u>c \right\}\right)\right\}, & \mbox{if }\ x\in \bigcup_{c>0} \T_t\left(\left\{u>c \right\}\right), \\
                0, & \mbox{if }\ x\not\in \bigcup_{c>0} \T_t\left(\left\{u>c \right\}\right),
              \end{cases}
    \end{equation*}
    is called CStS of $u$ (with respect to $x_1$).
  \end{itemize}
\end{definition}
For convenience, we will henceforth denote $M^t$ and $u^t$ as $\T_t(M)$ and $\T_t(u)$, respectively, for $t \in [0, +\infty]$. We summarize below some basic properties of CStS, established by Brock in \cite{Bro1995MR1330619, Broc2000MR1758811}.

\begin{proposition}\label{pro0}
  Let $M\in \M(\R^N)$, $u,v\in \s,\, t\in [0,+\infty]$. Then
\begin{itemize}
    \item [(1)]Equimeasurability:
    \begin{equation*}
      \LL^N(M)=\LL^N(M^t)\ \ \ \text{and}\ \ \ \left\{u^t>c \right\}=\left\{u>c \right\}^t,\  \forall\, c>0.
    \end{equation*}

    \smallskip
    \item [(2)]Monotonicity: If $u\le v$, then $u^t\le v^t$.

    \smallskip
    \item [(3)]Commutativity: If $\phi: [0, +\infty)\to [0, +\infty)$ is bounded and nondecreasing with $\phi(0)=0$, then
    \begin{equation*}
      \phi(u^t)=[\phi(u)]^t.
    \end{equation*}

    \smallskip
    \item [(4)]Homotopy:
    \begin{equation*}
      M^0=M,\ \ \  u^0=u,\ \ \ M^\infty =M^*,\ \ \ u^\infty=u^*.
    \end{equation*}
Furthermore, from the construction of the CStS it follows that, if $M=M^*$ or $u=u^*$, then $M^t=M$, respectively, $u=u^t$ for all $t\in [0, +\infty]$.

    \smallskip
        \item [(5)]Cavalieri's pinciple: If $F$ is continuous and if $F(u)\in L^1(\R^N)$ then
        \begin{equation*}
          \int_{\R^N} F(u)\d x= \int_{\R^N} F(u^t)\d x.
        \end{equation*}

    \smallskip
    \item [(6)]Continuity in $L^p$: If $t_n\to t $ as $n\to +\infty$ and $u\in L^p(\R^N)$ for some $p\in [1, +\infty)$, then
    \begin{equation*}
      \lim_{n\to +\infty}\|u^{t_n}-u^t\|_p=0.
    \end{equation*}

    \smallskip
        \item [(7)]Nonexpansivity in $L^p$: If $u, v\in L^p(\R^N)$ for some $p\in [1, +\infty)$, then
        \begin{equation*}
          \|u^{t}-v^t\|_p\le \|u-v\|_p.
        \end{equation*}

          \smallskip
        \item [(8)]Hardy-Littlewood inequality: If $u, v\in L^2(\R^N)$ then
        \begin{equation*}
           \int_{\R^N} u^t v^t\d x\ge \int_{\R^N} u v\d x.
        \end{equation*}

         \smallskip
        \item [(9)]If $u$ is Lipschitz continuous with Lipschitz constant $L$, then $u^t$ is Lipschitz continuous, too, with Lipschitz constant less than or equal to $L$.

         \smallskip
        \item [(10)] If $\text{supp}\, u\subset B_R$ for some $R>0$, then we also have $\text{supp}\, u^t\subset B_R$. If, in addition, $u$ is Lipschitz continuous with Lipschitz constant $L$, then we have
            \begin{equation*}
              |u^t(x)-u(x)|\le LR\, t,\ \ \ \forall\, x\in B_R.
            \end{equation*}
           Furthermore, there holds
           \begin{equation}\label{6-1}
             \int_{B_R}G(|\nabla u^t|)\d x \le  \int_{B_R}G(|\nabla u|)\d x,
           \end{equation}
           for every convex function $G: [0, +\infty) \to [0, +\infty)$ with $G(0)=0$.
  \end{itemize}
\end{proposition}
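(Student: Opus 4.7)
The plan is to reduce each assertion, via the slice-wise definition $M^t(x')=\T_t(M(x'))$, to a corresponding property of the one-dimensional continuous symmetrization $\T_t$ on $\M(\R)$. Properties (1)--(5) fall out of this scheme almost immediately: equimeasurability and the level-set identity in (1) follow from the 1D equimeasurability property combined with Fubini; monotonicity (2) comes from the 1D monotonicity applied level-set by level-set; commutativity (3) is immediate because for nondecreasing $\phi$ with $\phi(0)=0$ the superlevel sets of $\phi(u)$ are precisely superlevel sets of $u$, so the definition via $\T_t(\{u>c\})$ is preserved; homotopy (4) is verified on intervals using the interval property, which gives $\T_t([x-R,x+R])\to[-R,R]$ as $t\to\infty$ and $\T_0=\mathrm{id}$, and is extended to general sets by approximation (together with the fact that if $M=M^*$ then the interval property shows $\T_t$ fixes each slice); Cavalieri's principle (5) then follows from (1) via a layer-cake representation $\int F(u)\,dx=\int_0^\infty F'(c)\,\LL^N(\{u>c\})\,dc$.

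For the $L^p$-type statements (6)--(8), my strategy is to prove them first for characteristic functions and then pass to general $u$ by a layer-cake / Brascamp--Lieb--Luttinger style decomposition. Continuity (6) for $u=\chi_M$ reduces to showing $\LL^N(M^{t_n}\triangle M^t)\to 0$, which follows from 1D continuity of $\T_t$ in the symmetric-difference metric and dominated convergence in $x'$; the general statement is then obtained by approximating $u$ in $L^p$ by simple functions and using (7). Nonexpansivity (7) is checked first for pairs of indicators, where on each slice it reduces to a direct 1D estimate showing that the interval-moving flow does not create new symmetric differences, and then lifted to $L^p$ using the layer-cake decomposition and Minkowski's integral inequality. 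The Hardy--Littlewood inequality (8) is then a consequence of (1) and (7): expanding $\|u^t-v^t\|_2^2=\|u^t\|_2^2+\|v^t\|_2^2-2\int u^tv^t\,dx$, using $\|u^t\|_2=\|u\|_2$ and $\|v^t\|_2=\|v\|_2$ from equimeasurability, and comparing with $\|u-v\|_2^2$ yields $\int u^tv^t\,dx\ge\int uv\,dx$.

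The main obstacle is property (10), in particular the P\'olya--Szeg\H{o}-type inequality \eqref{6-1}. Lipschitz preservation (9) is handled by establishing the two-point inequality $|u^t(x)-u^t(y)|\le L|x-y|$ whenever $x,y$ differ only in the $x_1$-coordinate, which reduces to a one-dimensional claim that follows from the interval property applied to the level sets of $u$ restricted to the line through $x$ and $y$; the general Lipschitz bound is then a combination with the analogous check in the other coordinates, for which $u^t$ is unchanged. The support inclusion in (10) is immediate from the interval property, and the bound $|u^t(x)-u(x)|\le LRt$ follows by tracking the displacement $[y-R,y+R]\mapsto[ye^{-t}-R,ye^{-t}+R]$, which shows each level-set point drifts at speed at most $R$ in $t$. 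For \eqref{6-1}, the plan is to combine the coarea formula
\begin{equation*}
\int_{B_R}G(|\nabla u|)\,dx=\int_0^{\infty}\int_{\{u=c\}}\frac{G(|\nabla u|)}{|\nabla u|}\,d\mathcal{H}^{N-1}\,dc
\end{equation*}
with the monotonicity of the perimeter $t\mapsto\mathcal{H}^{N-1}(\partial^*\{u^t>c\})$ along the CStS flow, and then apply convexity of $G$ via Jensen's inequality on each level set (using equimeasurability $\LL^N(\{u^t>c\})=\LL^N(\{u>c\})$ to fix the volume constraint in the isoperimetric comparison). The monotonicity of the perimeter along the flow is the deep ingredient --- this is the heart of the matter and is established in \cite{Broc2000MR1758811}; once it is in hand, \eqref{6-1} reduces to a routine convexity argument on the level-set decomposition.
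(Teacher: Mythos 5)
The paper itself does not prove Proposition~\ref{pro0}: it is presented explicitly as a summary of properties ``established by Brock in \cite{Bro1995MR1330619, Broc2000MR1758811}.'' There is therefore no in-paper argument to compare against beyond the citation. Evaluating your reconstruction on its own terms: the reduction of (1)--(8) to one-dimensional statements on slices, combined with Fubini, layer-cake decompositions, approximation by simple functions, and the polarization identity in $L^2$ for (8), is a reasonable and essentially correct sketch, modulo routine care (e.g.\ in (5) one should argue directly from equimeasurability of distribution functions rather than differentiate a merely continuous $F$, and in (3) one must note that $\{\phi(u)>c\}$ is always of the form $\{u>a\}$ up to a $\mathcal{L}^N$-null set). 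The derivation of Hardy--Littlewood (8) from (1) and (7) via expanding $\|u^t-v^t\|_2^2$ is clean and correct, provided (7) is established independently.

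Property (9) is stated too loosely. The $x'$-Lipschitz constant is \emph{not} ``unchanged'' by the flow; the slices mix, and the bound requires an argument. A correct route in your framework is: if $|x'-y'|=\delta$ then $u(\cdot,y')\ge\bigl(u(\cdot,x')-L\delta\bigr)_+$ pointwise in $x_1$; applying the 1D monotonicity (2) and commutativity (3) with $\phi(s)=(s-L\delta)_+$ gives $u^t(\cdot,y')\ge u^t(\cdot,x')-L\delta$, and by symmetry the $x'$-Lipschitz constant is preserved. The $x_1$-direction is the genuinely one-dimensional part and does require the structure of $\T_t$, as you indicate. With this fix the item goes through.

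The genuine gap is \eqref{6-1}. The coarea-plus-Jensen chain points in the wrong direction. Jensen's inequality applied to the probability measure $|\nabla u|^{-1}\,d\mathcal{H}_{N-1}/(-\mu'(c))$ on $\{u=c\}$, where $\mu(c)=\mathcal{L}^N(\{u>c\})$, yields the \emph{lower} bound
\begin{equation*}
\int_{\{u=c\}}\frac{G(|\nabla u|)}{|\nabla u|}\,\d\mathcal{H}_{N-1}\ \ge\ -\mu'(c)\,G\!\left(\frac{\mathcal{H}_{N-1}(\{u=c\})}{-\mu'(c)}\right),
\end{equation*}
and the same lower bound with $u^t$ in place of $u$ (with the same $\mu$ by equimeasurability). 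Combined with perimeter decrease along the flow, this bounds both $\int G(|\nabla u|)$ and $\int G(|\nabla u^t|)$ \emph{from below} by a common quantity and gives no comparison between them. Converting the lower bound for $u^t$ into the needed upper bound would require equality in Jensen, i.e.\ $|\nabla u^t|$ constant on each level set $\{u^t=c\}$. That holds for the full Schwarz rearrangement, but it fails for $u^t$ at any finite $t$ and indeed already fails for the terminal Steiner symmetrization $u^*$. The Pólya--Szegő property for (continuous) Steiner symmetrization is not a corollary of perimeter monotonicity. Brock's proof in \cite{Broc2000MR1758811} is an honest slice-wise argument: $u$ is approximated by functions that are piecewise linear in $x_1$, the 1D CStS of each slice can be tracked explicitly as a motion of finitely many intervals, and the convexity of $G$ is applied to compare the $x_1$- and $x'$-derivative contributions on each slice, followed by Fubini and a limiting argument. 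This is the CStS analogue of the classical slice-wise proof of Pólya--Szegő for Steiner symmetrization, and it is independent of (and not replaceable by) the coarea route you sketch.
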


\subsection{Local symmmetry}
Following Brock \cite{Broc2000MR1758811}, we introduce a local version of symmetry for a function $u\in \s$.
\begin{definition}[Local symmetry in a certain direction]\label{def2-1}
Let $u\in \s$ be continuously differentiable on $\{x\in \R^N: 0<u(x)<\sup_{\R^N}u \}$, and suppose that this last set is open. Further, suppose that $u$ has the following property. If $y=(y_1, y')\in \R^N$ with
  \begin{equation*}
    0<u(y)<\sup_{\R^N} u,\ \ \ \partial_1 u(y)>0,
  \end{equation*}
  and $\tilde{ y}$ is the (unique) point satisfying
  \begin{equation*}
    \tilde{y}=(\tilde{y}_1, y'),\ \ \ \tilde{y}_1>y_1,\ \ \ u(y)=u(\tilde{y})< u(s, y'),\ \ \forall\, s\in (y_1,\tilde{y}_1),
  \end{equation*}
  then
  \begin{equation*}
  \begin{split}
     \partial_i u(y) & =\partial_iu(\tilde{y}),\ \ \ i=2, \cdots, N, \\
      \partial_1u(y) & =-\partial_1 u (\tilde{y}).
  \end{split}
  \end{equation*}
  Then $u$ is called \emph{locally symmetric in the direction $x_1$}.
\end{definition}

Suppose that for arbitrary rotations $x\mapsto y=(y_1, y')$ of the coordinate system, $u$ is locally symmetric in the direction $y_1$. Then $u$ is said to be \emph{locally symmetric}. In other words, a function $u$ is said to be locally symmetric if it is locally symmetric in \emph{every} direction.

Although locally symmetric functions are not globally radial, they possess strong symmetric characteristics. Roughly speaking, it is radially symmetric and radially decreasing in some annuli (probably infinitely many) and flat elsewhere. For convenience, we denote by $Q_r(x)$ the closed ball in $\mathbb{R}^N$ centered at $x$ with radius $r\ge 0$, and let $Q_r:=Q_r(0)$.
\begin{proposition}[ \cite{Broc2000MR1758811}, Theorem 6.1]\label{pro1}
  Let $u\in \s$ be a locally symmetric function. Set $V:= \{x\in \R^N: 0<u(x)<\sup_{\R^N}u \}$. Then, we have the following decomposition:
    \begin{itemize}
    \item [(1)]$\displaystyle V=\bigcup_{k\in K} A_k \cup \{x\in V: \nabla u(x)=0\}$, \text{where}
    \begin{equation*}
      A_k=B_{R_k}(z_k)\backslash {Q_{r_k}(z_k)},\ \ \ z_k\in \R^N,\ \ \ 0\le r_k<R_k;
    \end{equation*}
     \item[(2)]$K$ is a countable set;
        \smallskip
    \item [(3)] the sets $A_k$ are pairwise disjoint;
    \smallskip
     \item [(4)]$u(x)=U_k(|x-z_k|)$, $x\in A_k$, where $U_k\in C^1([r_k, R_k])$;
         \smallskip
    \item [(5)]$U'_k(r)<0$ for $r\in (r_k, R_k)$;
        \smallskip
    \item [(6)]$u(x)\ge U_k(r_k),\ \forall\,x\in Q_{r_k}(z_k)$, $k\in K$.
  \end{itemize}
\end{proposition}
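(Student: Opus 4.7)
The plan is to show that every connected component $\mathcal{C}$ of the open set $\mathcal{N} := \{x \in V : \nabla u(x) \neq 0\}$ has the annular form $B_R(z) \setminus Q_r(z)$ for some center $z$ and radii $0 \leq r < R$, with $u(x) = U(|x-z|)$ on $\mathcal{C}$ for a $C^1$ strictly decreasing profile $U$. Given this, items (1)--(5) follow by enumeration of components (countability is automatic, since each open $A_k$ contains a ball with rational center and radius), together with the observation $V \setminus \mathcal{N} = \{x \in V : \nabla u(x) = 0\}$. Item (6) is then extracted separately from the positional clause built into Definition \ref{def2-1}.

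For a fixed $y \in \mathcal{N}$, rotate coordinates so that $\nabla u(y) = |\nabla u(y)| e_1$. Local symmetry in the direction $e_1$ produces the counterpart $\tilde{y} = (\tilde{y}_1, y')$ with $\tilde{y}_1 > y_1$, $u(\tilde{y}) = u(y)$, and $u(s, y') > u(y)$ for all $s \in (y_1, \tilde{y}_1)$. Set
\[
z(y) := \tfrac{1}{2}\bigl(y + \tilde{y}\bigr),
\]
which depends $C^1$-smoothly on $y$ by the implicit function theorem applied to $u(s, y') = u(y)$. The key claim is that $u$ is locally a function of $|x - z(y)|$: for every unit direction $v$ with $\partial_v u(y) > 0$, local symmetry in direction $v$ gives a counterpart $\tilde{y}^v = y + s^v v$ satisfying the gradient-reflection identity
\[
\nabla u(\tilde{y}^v) \;=\; \nabla u(y) \;-\; 2\bigl(v \cdot \nabla u(y)\bigr)\, v.
\]
Combined with the uniqueness clause for $\tilde{y}^v$, this forces $\tilde{y}^v$ to lie on the sphere $\{|x - z(y)| = |y - z(y)|\}$, equivalently $s^v = 2\, v \cdot (z(y) - y)$, so the level set $\{u = u(y)\}$ locally coincides with this sphere. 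Sweeping over $y$ in a neighborhood yields local radial symmetry of $u$ about $z(y)$.

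By continuity, $z(\cdot)$ is locally constant on $\mathcal{N}$ and hence constant on each connected component $\mathcal{C}$, with common value $z = z_\mathcal{C}$; write $u(x) = U(|x - z|)$ on $\mathcal{C}$. Since $\nabla u \neq 0$ there, $U$ is $C^1$ with $U' \neq 0$ of constant sign on the interval of attained radii. The boundary $\partial \mathcal{C}$ is composed of points where $u \in \{0, \sup u\}$ or where $\nabla u = 0$; since $u > 0$ on $\mathcal{C}$, the outermost level must correspond to $u = 0$, forcing $U' < 0$ and $\mathcal{C} = B_R(z) \setminus Q_r(z)$ with $r = \inf_\mathcal{C} |x - z|$ and $R = \sup_\mathcal{C} |x - z|$. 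This establishes (1)--(5). For (6), I pick $y \in A_k$ close to the inner sphere and apply local symmetry in the direction pointing from $y$ through $z_k$: the counterpart $\tilde{y}$ then lies on the far side of $z_k$, and the strict inequality $u > u(y)$ on the open segment between $y$ and $\tilde{y}$---which necessarily crosses $Q_{r_k}(z_k)$---gives $u > U_k(|y - z_k|)$ on that crossing. Letting $|y - z_k| \downarrow r_k$ and varying the basepoint so as to sweep out $Q_{r_k}(z_k)$ yields (6).

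The main obstacle is the middle step: the passage from the hyperplane-reflection identities of Definition \ref{def2-1} to a genuine spherical level set. The gradient-reflection relation is available only for admissible directions $v$ in the open hemisphere $\{v : v \cdot \nabla u(y) > 0\}$, and one must rule out the possibility that the counterparts $\tilde{y}^v$ trace out a hypersurface which is tangent to the candidate sphere at $y$ but differs from it further away. The uniqueness clause on $\tilde{y}^v$ (the \emph{first} point with $u = u(y)$ along the ray), together with a careful limiting argument as $v$ approaches the equator $\{v : v \cdot \nabla u(y) = 0\}$, is what closes this gap; everything else in the argument is a continuity or enumeration step once this is in hand.
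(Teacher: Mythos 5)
This statement is quoted verbatim from Brock \cite{Broc2000MR1758811} (Theorem 6.1); the present paper does not prove it, so there is no in-paper argument to compare against. Reviewing your proposal on its own terms:

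Your overall plan---decompose $\mathcal{N}=\{x\in V:\nabla u(x)\neq0\}$ into connected components and show each one is an annulus on which $u$ is radial and strictly decreasing---is the right shape for the theorem, and the enumeration, the $V\setminus\mathcal{N}$ identification, and the closing argument for item (6) are all plausible \emph{once the radial structure is in hand}. But the middle step is a genuine gap, and you know it: the sentence ``Combined with the uniqueness clause for $\tilde{y}^v$, this forces $\tilde{y}^v$ to lie on the sphere $\{|x-z(y)|=|y-z(y)|\}$'' is an assertion of the conclusion, not a deduction. The gradient-reflection identity only says that $\nabla u(y)+\nabla u(\tilde{y}^v)$ is orthogonal to the chord direction $v$; that constrains the level set $\{u=u(y)\}$ to be ``self-reflective along chords through $y$,'' and it is a nontrivial geometric fact---not a one-line consequence of uniqueness of the first return---that the only hypersurfaces with this property through $y$ for all admissible $v$ are spheres. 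You flag this as ``the main obstacle'' and say a ``careful limiting argument'' closes it, but no such argument appears. This is precisely the heart of Brock's Theorem 6.1; the rest of your sketch is bookkeeping around it.

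Two further points are glossed over and would need attention in a complete proof. First, the identification $z(y)=\tfrac12(y+\tilde{y})$ presupposes that the counterpart in the direction of $\nabla u(y)$ is the antipodal point of $y$ across the eventual center; a priori $\tilde{y}$ is merely the first return of $u$ to the value $u(y)$ along the ray, and nothing yet prevents that first return from occurring inside what will turn out to be $Q_{r_k}(z_k)$. Second, the claim ``since $u>0$ on $\mathcal{C}$, the outermost level must correspond to $u=0$, forcing $U'<0$'' is not correct as stated: the outer boundary sphere of a component $A_k$ need not lie on $\{u=0\}$---it may consist entirely of critical points of $u$ in the interior of $V$. The sign $U'_k<0$ has to be extracted from the local-symmetry condition itself (roughly, if $U'_k>0$ one follows a ray outward in the direction of $\nabla u$ and shows the first-return point cannot exist), not from a boundary-value argument. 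In short, the skeleton is reasonable, but the load-bearing step and two supporting steps are asserted rather than proved.
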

It can be seen that if $u\in \s$ is locally symmetric, then the super-level sets $\{u>t\}$ $(t\ge 0)$ are countable unions of mutually disjoint balls, and $|\nabla u|=\text{const.}$ on the boundary of each of these balls.

\subsection{A symmetry criterion due to F. Brock}

The following symmetry criterion is due to Brock \cite{Broc2000MR1758811}.
\begin{proposition}[\cite{Broc2000MR1758811}, Theorem 6.2]\label{pro2}
Let $u\in H^1(\R^N)\cap C(\R^N)$ be a nonnegative function with compact support. Recalling Definition \ref{csts}, let $u^t$ denote the CStS of $u$ with respect to $x_1$. Suppose that $u$ is continuously differentiable on $V$, where
\begin{equation*}
  V=\left\{x\in \R^N: 0<u(x)<\sup_{\R^N}u\right\}.
\end{equation*}
Let $G:[0,+\infty) \to [0, +\infty)$ be strictly convex with $G(0)=0$. Then, if
  \begin{equation*}
    \lim_{t\to 0}\frac{1}{t}\left(\int_{\R^N} G\left(|\nabla u^t|\right)\,\d x-\int_{\R^N} G\left(|\nabla u|\right)\,\d x\right)=0,
  \end{equation*}
  $u$ is locally symmetric in the direction $x_1$.
\end{proposition}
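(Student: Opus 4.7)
The overall strategy is to view the hypothesis as the assertion $\Phi'(0^+)=0$ for
\[
\Phi(t):=\int_{\R^N} G(|\nabla u^t|)\,\d x,
\]
which is already known to be non-increasing by Proposition \ref{pro0}, item (10). The plan is to compute this right-derivative explicitly, express it as minus a non-negative ``asymmetry functional,'' and then read off the equality case as the pointwise reflection identity of Definition \ref{def2-1}, using the strict convexity of $G$ in the crucial way.

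The first step is to exploit that the CStS acts one-dimensionally on each horizontal line $\{x'=\mathrm{const}\}$. For each fixed $x'\in \R^{N-1}$, the super-level set $\{x_1:u(x_1,x')>c\}$ is a countable disjoint union of open intervals $(a_i(c,x'),b_i(c,x'))$, and Definition \ref{csts} together with the interval property of $\T_t$ yields
\[
a_i^t=m_i e^{-t}-R_i,\qquad b_i^t=m_i e^{-t}+R_i,\qquad m_i=\tfrac{a_i+b_i}{2},\ R_i=\tfrac{b_i-a_i}{2}.
\]
On $V$ the function $u$ is $C^1$, and by Sard's theorem for a.e. level $c$ one has $\nabla u\neq 0$ on $\{u=c\}$, so the implicit function theorem gives $\partial_1 u=1/\partial_c a_i$ and $\nabla_{x'}u=-\nabla_{x'}a_i/\partial_c a_i$ at the left endpoint, hence $|\nabla u|^2=(1+|\nabla_{x'}a_i|^2)/(\partial_c a_i)^2$, with analogous formulas at $b_i$. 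Substituting into the coarea formula rewrites $\Phi(t)$ as an integral over $(c,x')$ of an explicit function of $\partial_c a_i^t,\partial_c b_i^t,\nabla_{x'}a_i^t,\nabla_{x'}b_i^t$.

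Next I would differentiate this representation at $t=0$, using the identities
\[
\tfrac{d}{dt}\big|_{t=0}\partial_c a_i^t=\tfrac{d}{dt}\big|_{t=0}\partial_c b_i^t=-(\partial_c a_i+\partial_c b_i)/2,
\]
and the analogous ones for $\nabla_{x'}a_i^t,\nabla_{x'}b_i^t$, to obtain
\[
\Phi'(0^+)=-\int_0^{\sup u}\!\!\int_{\R^{N-1}}\sum_i \Delta_i(c,x')\,\d x'\,\d c,
\]
with each $\Delta_i\ge 0$. The decisive convexity input is the auxiliary function $\psi(s):=sG'(s)-G(s)$: since $G$ is strictly convex with $G(0)=0$, we have $\psi'(s)=sG''(s)>0$, so $\psi$ is strictly increasing on $(0,\infty)$. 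Writing $\alpha_i=|\partial_1 u(a_i)|$ and $\beta_i=|\partial_1 u(b_i)|$, the one-dimensional piece of $\Delta_i$ is of the form $\tfrac{\beta_i-\alpha_i}{2\alpha_i\beta_i}(\psi(\beta_i)-\psi(\alpha_i))\ge 0$, vanishing exactly when $\alpha_i=\beta_i$; a parallel analysis of the transverse contribution, coupling $\partial_c a_i$ with $\nabla_{x'}a_i$, forces $\nabla_{x'}a_i=-\nabla_{x'}b_i$ at equality. Via the implicit function representation, these two conditions together translate precisely to $\partial_1 u(a_i)=-\partial_1 u(b_i)$ and $\nabla_{x'}u(a_i)=\nabla_{x'}u(b_i)$, i.e., the reflection identity of Definition \ref{def2-1}.

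Finally, the hypothesis $\Phi'(0^+)=0$ forces $\Delta_i(c,x')=0$ for a.e.\ $(c,x')$ and every $i$. Since every admissible pair $(y,\tilde y)$ in Definition \ref{def2-1} arises as $(a_i(u(y),y'),b_i(u(y),y'))$ at the level $c=u(y)$, the continuity of $\nabla u$ on $V$ upgrades this a.e.\ identity to a pointwise identity on all of $V$, and $u$ is locally symmetric in the direction $x_1$. The main obstacle is the derivative computation itself: massaging $\Delta_i$ into a form where the strict monotonicity of $\psi$ yields both the sign and the correct equality case, while simultaneously capturing the coupling between the normal slopes $1/\partial_c a_i$ and the tangential derivatives $\nabla_{x'}a_i$. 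A secondary technical point is the justification of interchanging $\d/\d t$ with the coarea integral, which requires the $C^1$-regularity of $c\mapsto(a_i,b_i)$ for a.e.\ $c$ (Sard) together with the observation that on a generic slice intervals do not merge instantaneously, so collisions contribute only at higher order in $t$ and do not affect the right-derivative at $t=0$.
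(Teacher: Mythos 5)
Since the paper does not reprove Proposition \ref{pro2} but quotes it from Brock \cite{Broc2000MR1758811}, your sketch must be measured against Brock's argument, and in spirit it matches: slice in $x_1$, write the super-level sets on each line as countable disjoint unions of intervals, use the interval property of $\T_t$ to get $a_i^t=m_ie^{-t}-R_i$, $b_i^t=m_ie^{-t}+R_i$, and express $\Phi'(0^+)$ as minus a sum of pairwise asymmetry terms whose vanishing is governed by strict convexity. Your 1D formula is in fact exactly right: setting $\Psi(s):=sG(1/s)$ for $s>0$ one has $\Psi'(s)=-\psi(1/s)$ with $\psi(z)=zG'(z)-G(z)$, and since $\tfrac{d}{dt}\big|_{t=0}\partial_c a_i^t=\tfrac{d}{dt}\big|_{t=0}\partial_c b_i^t=-\tfrac12(\partial_c a_i+\partial_c b_i)$, the 1D contribution to $-\Phi'(0^+)$ from each interval is $\tfrac{\beta_i-\alpha_i}{2\alpha_i\beta_i}\bigl(\psi(\beta_i)-\psi(\alpha_i)\bigr)\ge 0$, with equality iff $\alpha_i=\beta_i$ because $\psi$ is \emph{strictly} increasing — this is precisely where strict convexity of $G$ enters, and merely convex $G$ would lose the equality case. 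The translation of $\alpha_i=\beta_i$, $\nabla_{x'}a_i=-\nabla_{x'}b_i$ into the reflection identity of Definition \ref{def2-1} via the implicit-function formulas is also correct.

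The genuine gap is the justification of differentiating $\Phi(t)=\int\!\!\int\sum_iA_i(t)+B_i(t)\,\d c\,\d x'$ under the integral sign, which you wave away with the remark that on a generic slice intervals do not merge instantaneously. Pointwise that is true — the merging time of two adjacent intervals is $t^*=\log\!\bigl((m_2-m_1)/(R_1+R_2)\bigr)>0$ — but $t^*(c,x')$ has no uniform positive lower bound: as $(c,x')$ approaches a level through a saddle, or a level at which $\nabla u$ vanishes somewhere, $t^*\to 0$ and the number of intervals can blow up, so collisions do \emph{not} automatically enter only at higher order once one integrates over all $(c,x')$. Brock circumvents this by first establishing the derivative identity on a tame class (piecewise linear/smooth approximants) where the level-set topology and merging times are under explicit control, obtaining a closed lower bound on $\Phi'(0^+)$ that is upper-semicontinuous under $H^1$-convergence, and only then passing to general $u\in C^1(V)$; the integrand form $-\sum_i\Delta_i$ survives the limit whereas a naive termwise differentiation does not. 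You should also make explicit that the a.e.-in-$(c,x')$ vanishing of $\Delta_i$ is upgraded to the pointwise reflection identity on all of $V$ by Sard (to exclude critical levels) together with continuity of $\nabla u$ on $V$; as written this step is asserted rather than argued.
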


\section{Proofs of Theorems \ref{th1} and \ref{th2}}\label{s3}

In this section, we present the proofs of Theorems \ref{th1} and \ref{th2}. Let $u$ be as given in Theorem \ref{th1} or Theorem \ref{th2}. For convenience, we extend $u$ by zero to a function in $\R^N$, retaining the notation $u$. It follows from \eqref{eq6} or \eqref{eq7} that $u$ is Lipschitz in $\R^N$.

Our goal is to prove that $\Omega$ is a ball. Observe that if $u$ is locally symmetric, then the super-level set $\Omega=\{u>0\}$ consists of a countable union of mutually disjoint balls. Since $\Omega$ is connected, it must be a single ball. Thus, our task now is reduced to proving that $u$ is locally symmetric. To this end, we begin by establishing  several technical lemmas.

\subsection{Several technical lemmas}
Recalling Definition \ref{csts}, let $u^t$ denote the CStS of $u$ with respect to $x_1$. Let
\begin{equation*}
  G(z):=\int_{0}^{z}g(s)\,\d s.
\end{equation*}
Clearly, $G:[0,+\infty)\to [0,+\infty)$ is a strictly convex function satisfying $G(0)=0$.
\begin{lemma}[Monotonicity lemma]\label{le1}
  Let $\gamma_0$ and $\gamma_1$ be two positive constants satisfying $\gamma_0\ge \gamma_1$. Then
\begin{equation*}
\begin{split}
     \int_{\R^N}G\left(|\nabla (u-\gamma_0)_+^t|\right)\d x& -\int_{\R^N}G\left(|\nabla (u-\gamma_0)_+|\right)\d x \\
     & \ge \int_{\R^N}G\left(|\nabla (u-\gamma_1)_+^t|\right)\d x-\int_{\R^N}G\left(|\nabla (u-\gamma_1)_+|\right)\d x
\end{split}
\end{equation*}
for all $t\ge0$.
\end{lemma}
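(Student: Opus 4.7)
The plan is to recast the claimed inequality as a direct consequence of the gradient-energy estimate \eqref{6-1} of Proposition \ref{pro0}(10), applied to a carefully chosen ``shell'' truncation of $u$.

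First I would use commutativity (Proposition \ref{pro0}(3)), or equivalently the definition of CStS via super-level sets (since $\{(u-\gamma)_+>c\}=\{u>c+\gamma\}$ for every $c>0$), to obtain
$$
\bigl((u-\gamma)_+\bigr)^t = (u^t-\gamma)_+ \qquad \text{for every } \gamma \ge 0.
$$
Since $u$ (and hence $u^t$) is Lipschitz with compact support, the standard Lipschitz chain rule gives $\nabla u = 0$ almost everywhere on each level set $\{u=c\}$, and likewise for $u^t$. Consequently
$$
\int_{\R^N} G\bigl(|\nabla(u-\gamma)_+|\bigr)\,\d x = \int_{\{u>\gamma\}} G(|\nabla u|)\,\d x,
$$
and similarly for $u^t$. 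A direct rearrangement of the two sides of the lemma then reduces the claim to the single inequality
$$
\int_{\{\gamma_1<u^t<\gamma_0\}} G(|\nabla u^t|)\,\d x \;\le\; \int_{\{\gamma_1<u<\gamma_0\}} G(|\nabla u|)\,\d x. \qquad (\star)
$$

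Next I would introduce the ``shell'' truncation
$$
v := \bigl((u\wedge\gamma_0)-\gamma_1\bigr)_+ = (u-\gamma_1)_+ - (u-\gamma_0)_+,
$$
which is Lipschitz, compactly supported in some ball $B_R \supset \text{supp}\, u$, and satisfies $\nabla v = \chi_{\{\gamma_1<u<\gamma_0\}}\nabla u$ almost everywhere. Since the map $\phi(s) := ((s\wedge\gamma_0)-\gamma_1)_+$ is bounded, nondecreasing, and vanishes at $0$, Proposition \ref{pro0}(3) yields $v^t = \phi(u^t) = ((u^t\wedge\gamma_0)-\gamma_1)_+$, so that $v^t$ admits the analogous gradient decomposition with $u^t$ in place of $u$.

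Finally, applying \eqref{6-1} to $v$ on $B_R$ gives
$$
\int_{\R^N} G(|\nabla v^t|)\,\d x \;\le\; \int_{\R^N} G(|\nabla v|)\,\d x,
$$
which, in view of the gradient identities just noted, is precisely $(\star)$. The main conceptual step is spotting the correct truncation $v$, so that one application of \eqref{6-1} captures the difference of \emph{two} symmetrization defects simultaneously; once this choice is made, everything else is bookkeeping. I do not anticipate any serious obstacle beyond the measure-zero technicalities on the level sets $\{u=\gamma_i\}$ and $\{u^t=\gamma_i\}$, which are absorbed by the Lipschitz chain rule.
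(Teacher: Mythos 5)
Your proposal is correct and essentially reproduces the paper's own argument: both reduce the claim, via commutativity $[\phi(u)]^t=\phi(u^t)$, to the single ``shell'' inequality $\int G(|\nabla u^t|)\,\chi_{\{\gamma_1<u^t<\gamma_0\}}\le \int G(|\nabla u|)\,\chi_{\{\gamma_1<u<\gamma_0\}}$, and both prove that inequality by applying \eqref{6-1} from Proposition~\ref{pro0}(10) to the same truncation $\phi(u)=(u-\gamma_1)_+-(u-\gamma_0)_+$. The only cosmetic difference is that you use open strata $\{\gamma_1<u<\gamma_0\}$ and invoke $\nabla u=0$ a.e.\ on level sets, whereas the paper uses half-open strata $\{\gamma_0\ge u>\gamma_1\}$, and these agree up to sets on which the integrands vanish.
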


\begin{proof}
  Note that $u$ is bounded in $\R^N$. In view of Proposition \ref{pro0} (3), we observe that $(u-\gamma_0)_+^t=(u^t-\gamma_0)_+$. Thus, we have
  \begin{equation*}
  \begin{split}
       \int_{\R^N}G\left(|\nabla (u-\gamma_0)_+^t|\right)\d x  &-\int_{\R^N}G\left(|\nabla (u-\gamma_0)_+|\right)\d x \\
       &  = \int_{\{u^t>\gamma_0\}}G\left(|\nabla u^t|\right)\d x -\int_{\{u^t>\gamma_0\}}G\left(|\nabla u|\right)\d x.
  \end{split}
  \end{equation*}
  Similarly,
  \begin{equation*}
  \begin{split}
       \int_{\R^N}G\left(|\nabla (u-\gamma_1)_+^t|\right)\d x  &-\int_{\R^N}G\left(|\nabla (u-\gamma_1)_+|\right)\d x \\
       &  = \int_{\{u^t>\gamma_1\}}G\left(|\nabla u^t|\right)\d x -\int_{\{u^t>\gamma_1\}}G\left(|\nabla u|\right)\d x.
  \end{split}
  \end{equation*}
  We now estimate
  \begin{equation*}
  \begin{split}
          \int_{\R^N}&G\left(|\nabla (u-\gamma_0)_+^t|\right)\d x -\int_{\R^N}G\left(|\nabla (u-\gamma_0)_+|\right)\d x \\
       &   = \int_{\{u^t>\gamma_0\}}G\left(|\nabla u^t|\right)\d x -\int_{\{u>\gamma_0\}}G\left(|\nabla u|\right)\d x \\
       & = \left(\int_{\{u^t>\gamma_1\}}G\left(|\nabla u^t|\right)\d x-\int_{\{u^t>\gamma_1\}}G\left(|\nabla u|\right)\d x\right)\\
       &\ \ \ \ \ \ \ -\left( \int_{\{\gamma_0\ge u^t>\gamma_1\}}G\left(|\nabla u^t|\right)\d x-\int_{\{\gamma_0\ge u^t>\gamma_1\}}G\left(|\nabla u|\right)\d x   \right)\\
       & = \left(\int_{\R^N}G\left(|\nabla (u-\gamma_1)_+^t|\right)\d x-\int_{\R^N}G\left(|\nabla (u-\gamma_1)_+|\right)\d x\right)\\
       &\ \ \ \ \ \ \ -\left( \int_{\{\gamma_0\ge u^t>\gamma_1\}}G\left(|\nabla u^t|\right)\d x-\int_{\{\gamma_0\ge u^t>\gamma_1\}}G\left(|\nabla u|\right)\d x   \right).
  \end{split}
  \end{equation*}
  So it remains to show that
  \begin{equation}\label{3-2}
   \int_{\{\gamma_0\ge u^t>\gamma_1\}}G\left(|\nabla u^t|\right)\d x-\int_{\{\gamma_0\ge u>\gamma_1\}}G\left(|\nabla u|\right)\d x\le 0.
  \end{equation}
  Consider a nondecreasing function $\phi: \R \to \R$ defined by
  \begin{equation*}
    \phi(s)=\begin{cases}
              \gamma_0-\gamma_1, & \mbox{if}\  s\ge \gamma_0, \\
              s-\gamma_1, & \mbox{if}\  \gamma_1<s<\gamma_0, \\
              0, & \mbox{if}\  s\le \gamma_1.
            \end{cases}
  \end{equation*}
By the definition of $\phi$, we first have
  \begin{equation}\label{3-3}
     \int_{\R^N} G\left(|\nabla \phi(u)|\right)\d x = \int_{\{\gamma_0\ge u>\gamma_1\}}G\left(|\nabla u|\right)\d x.
  \end{equation}
    By Proposition \ref{pro0} (3), we observe that $[\phi(u)]^t=\phi(u^t)$. Hence, it follows that
\begin{equation}\label{3-4}
  \int_{\R^N} G\left(|\nabla [\phi(u)]^t|\right)\d x  = \int_{\R^N} G\left(|\nabla \phi(u^t)|\right)\d x=\int_{\{\gamma_0\ge u^t>\gamma_1\}}G\left(|\nabla u^t|\right)\d x.
\end{equation}
On the other hand, Proposition \ref{pro0} (10) implies that
  \begin{equation}\label{3-5}
    \int_{\R^N}G\left( |\nabla [\phi(u)]^t|\right)\d x\le \int_{\R^N} G\left(|\nabla \phi(u)|\right)\d x.
  \end{equation}
Now, \eqref{3-2} follows directly from the combination of \eqref{3-3}, \eqref{3-4}, and \eqref{3-5}. The proof is thus complete.
\end{proof}

Set $h(z):=zg(z)-G(z)$, $z\in[0, +\infty)$. It is readily verified that $h$ is a strictly increasing function.
\begin{lemma}\label{le2}
  Let $\gamma>0$ be a fixed constant. Then
  \begin{equation}\label{3-7}
  \begin{split}
      &   \int_{\{u^t>\gamma\}}G(|\nabla u^t|)\d x-\int_{\{u>\gamma\}}G(|\nabla u|)\d x\ge \\
       &  \int_{\{u^t>\gamma\}}g(|\nabla u|)|\nabla u^t|\d x-\int_{\{u>\gamma\}}g(|\nabla u|)|\nabla u|\d x-\left(\int_{\{u^t>\gamma\}}h(|\nabla u|)\d x-\int_{\{u>\gamma\}}h(|\nabla u|)\d x \right).
  \end{split}
  \end{equation}
\end{lemma}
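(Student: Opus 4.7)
The plan is to derive \eqref{3-7} from a single pointwise convexity inequality applied to $G$, followed by an algebraic rearrangement using the very definition of $h$. First I would note that since $G(z)=\int_0^z g(s)\,\d s$ with $g$ continuous, strictly increasing, and $g(0)=0$, the function $G$ is strictly convex on $[0,+\infty)$, differentiable with $G'=g$, and $G(0)=0$. The supporting hyperplane inequality then gives, for all $a,b\ge 0$,
\begin{equation*}
G(a)\;\ge\;G(b)+g(b)(a-b)\;=\;g(b)\,a\;-\;\bigl(bg(b)-G(b)\bigr)\;=\;g(b)\,a-h(b).
\end{equation*}

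Next I would apply this pointwise with $a=|\nabla u^t(x)|$ and $b=|\nabla u(x)|$ (both well-defined almost everywhere, since by Proposition \ref{pro0}(9)--(10) both $u$ and $u^t$ are Lipschitz on $\R^N$), and then integrate over the set $\{u^t>\gamma\}$:
\begin{equation*}
\int_{\{u^t>\gamma\}}G(|\nabla u^t|)\,\d x\;\ge\;\int_{\{u^t>\gamma\}}g(|\nabla u|)\,|\nabla u^t|\,\d x\;-\;\int_{\{u^t>\gamma\}}h(|\nabla u|)\,\d x.
\end{equation*}
To bring this into the form \eqref{3-7} I would then subtract $\int_{\{u>\gamma\}}G(|\nabla u|)\,\d x$ from both sides, and split the subtracted term by means of the elementary identity $G(z)=zg(z)-h(z)$ (which is just the definition of $h$ rearranged):
\begin{equation*}
-\int_{\{u>\gamma\}}G(|\nabla u|)\,\d x\;=\;-\int_{\{u>\gamma\}}g(|\nabla u|)\,|\nabla u|\,\d x\;+\;\int_{\{u>\gamma\}}h(|\nabla u|)\,\d x.
\end{equation*}
Substituting this into the previous display yields exactly \eqref{3-7}.

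There is no genuine obstacle here: the content of the lemma is essentially that $h$ is the Legendre-type companion of $G$ — equivalently, that $zg(z)-G(z)$ is the ``error'' in the supporting hyperplane inequality for $G$ — so once this is recognized the proof reduces to convexity plus bookkeeping. The only mild point of care is that the two sides of \eqref{3-7} involve different domains of integration, $\{u^t>\gamma\}$ versus $\{u>\gamma\}$; but these sets are never compared pointwise, and since the algebraic identity $G=zg(z)-h(z)$ is used precisely to convert the $G$-integral over $\{u>\gamma\}$ into the relevant combination of $g|\nabla u|$- and $h$-integrals over the same set, no measure-theoretic comparison between the two level sets is needed.
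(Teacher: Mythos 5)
Your proof is correct and takes essentially the same route as the paper: both apply the supporting-hyperplane inequality $G(b)-G(a)\ge g(a)(b-a)$ pointwise with $a=|\nabla u|$, $b=|\nabla u^t|$ on the set $\{u^t>\gamma\}$ and then rearrange using $h(z)=zg(z)-G(z)$; the paper merely states the chain starting from the $g(|\nabla u|)|\nabla u^t|$ integral and bounding it from above, while you start from the $G(|\nabla u^t|)$ integral and bound it from below, which is the same computation written in the other direction.
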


\begin{proof}
  Since $G$ is convex, it follows that $G(b)-G(a)\ge g(a)(b-a)$ for all $a, b\in [0,+\infty)$. Therefore, we have that
  \begin{equation*}
    \begin{split}
        &  \int_{\{u^t>\gamma\}}g(|\nabla u|)|\nabla u^t|\d x-\int_{\{u>\gamma\}}g(|\nabla u|)|\nabla u|\d x \\
         & \le  \int_{\{u^t>\gamma\}}\left(G(|\nabla u^t|)-G(|\nabla u|)+g(|\nabla u|)|\nabla u| \right)\d x-\int_{\{u>\gamma\}}g(|\nabla u|)|\nabla u|\d x\\
         & =\int_{\{u^t>\gamma\}}G(|\nabla u^t|)\d x-\int_{\{u>\gamma\}}G(|\nabla u|)\d x +\left(\int_{\{u^t>\gamma\}}h(|\nabla u|)\d x-\int_{\{u>\gamma\}}h(|\nabla u|)\d x \right),
    \end{split}
  \end{equation*}
  from which \eqref{3-7} follows immediately. The proof is thus complete.
\end{proof}

\begin{lemma}\label{le3}
  There exists a constant $C_0>0$ such that
  \begin{equation*}
    \int_{\{u=\gamma\}}g\left(|\nabla u| \right)\d \mathcal{H}_{N-1}(x)\le C_0
  \end{equation*}
  for all sufficiently small $\gamma>0$.  Here $\mathcal{H}_{N-1}$ denotes the $(N-1)$-dimensional Hausdorff measure.
\end{lemma}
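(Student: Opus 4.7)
The plan is to derive, for all sufficiently small $\gamma>0$, the flux-type identity
\begin{equation*}
  \int_{\{u=\gamma\}} g(|\nabla u|)\,\d \mathcal{H}_{N-1} = \int_{\{u>\gamma\}} f(u)\,\d x,
\end{equation*}
and then bound the right-hand side by $\|f(u)\|_{L^\infty(\Omega)}|\Omega|$. This quantity is finite because $u$, extended by zero to $\R^N$, is bounded and Lipschitz; hence $f(u)$ is bounded on the range of $u$, since $f_1$ is continuous there and $f_2$ is of bounded variation, hence bounded.

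To establish that $\{u=\gamma\}$ is a regular $C^1$ hypersurface for small $\gamma$, I first fix a small $\varepsilon>0$ (with $\varepsilon<\mathbf{c}$ in the setting of Theorem \ref{th1}) and take the corresponding neighborhood $\mathcal V\supset\partial\Omega$ given by \eqref{eq6} or \eqref{eq7}. The set $K:=\overline\Omega\setminus\mathcal V$ is a compact subset of $\Omega$, and since $u>0$ is continuous on $K$, $m:=\min_K u>0$. For any $\gamma\in(0,m)$, continuity of $u$ and the fact that $u<\varepsilon<m$ on $\mathcal V\cap\Omega$ imply $\{u=\gamma\}\subset \mathcal V\cap\Omega$, so $|\nabla u|>0$ on $\{u=\gamma\}$. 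By the implicit function theorem, $\{u=\gamma\}$ is then a compact $C^1$ hypersurface and $\{u\ge\gamma\}$ is a compact subset of $\Omega$.

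The identity is obtained by testing the weak formulation \eqref{eq4} (which extends by mollification to Lipschitz test functions with compact support in $\Omega$) against the cutoff $\varphi_\delta:=\psi_\delta(u)$, where $\psi_\delta(s):=\min\{\max\{(s-\gamma)/\delta,\,0\},\,1\}$. Since $\nabla \varphi_\delta=\psi_\delta'(u)\nabla u$, the left-hand side of \eqref{eq4} equals
\begin{equation*}
   \frac{1}{\delta}\int_{\{\gamma<u<\gamma+\delta\}} g(|\nabla u|)|\nabla u|\,\d x = \frac{1}{\delta}\int_\gamma^{\gamma+\delta}\Psi(s)\,\d s
\end{equation*}
by the coarea formula, where $\Psi(s):=\int_{\{u=s\}}g(|\nabla u|)\,\d \mathcal{H}_{N-1}$. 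The right-hand side of \eqref{eq4} tends to $\int_{\{u>\gamma\}}f(u)\,\d x$ as $\delta\to 0^+$ by dominated convergence.

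The main technical point is passing from the averaged value $\tfrac{1}{\delta}\int_\gamma^{\gamma+\delta}\Psi$ to the pointwise value $\Psi(\gamma)$ for \emph{every} $\gamma\in(0,m)$ rather than merely a.e. This reduces to the continuity of $\Psi$ on $(0,m)$, which holds because $|\nabla u|$ is bounded below by a positive constant on a neighborhood of $\{u=\gamma\}$; a short IFT argument (parameterizing nearby level sets as graphs over $\{u=\gamma\}$ using the flow of $\nabla u/|\nabla u|^2$) then yields a $C^0$ dependence of the level-set integrals on $s$. This gives $\Psi(\gamma)=\int_{\{u>\gamma\}}f(u)\,\d x$ for every $\gamma\in(0,m)$, proving the lemma with $C_0 := \|f(u)\|_{L^\infty(\Omega)} |\Omega|$.
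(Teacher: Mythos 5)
Your proof follows essentially the same strategy as the paper's: both reduce the lemma to the flux identity $\int_{\{u=\gamma\}}g(|\nabla u|)\,\d\mathcal{H}_{N-1}=\int_{\{u>\gamma\}}f(u)\,\d x$ and then bound the right-hand side by the $L^1(\Omega)$-norm of $f(u)$, which is finite because $u$ is bounded and $f=f_1+f_2$ is bounded on the range of $u$. The paper's proof obtains the identity in one line by ``integrating \eqref{eq3} over $\{u>\gamma\}$ and applying Green's theorem'' (citing Evans and Temam for the generalized divergence theorem applicable to the continuous vector field $g(|\nabla u|)\nabla u/|\nabla u|$ with distributional divergence $-f(u)$), whereas you derive it by testing the weak formulation \eqref{eq4} against the piecewise-linear cutoff $\psi_\delta(u)$, applying the coarea formula, and passing to the limit $\delta\to 0^+$. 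Your route is more self-contained and takes the weak formulation more seriously, at the cost of needing the pointwise continuity of $\Psi(s)=\int_{\{u=s\}}g(|\nabla u|)\,\d\mathcal{H}_{N-1}$ — a step the Green's-theorem route avoids altogether.

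One technical point in your continuity argument should be repaired. You propose parameterizing nearby level sets by the flow of $\nabla u/|\nabla u|^2$; but $u$ is only $C^1$, so this vector field is merely continuous, and the flow need not be well-defined (Peano gives existence but not uniqueness). A cleaner way to get continuity of $\Psi$ is to apply the implicit function theorem directly to $(x_1,x',s)\mapsto u(x_1,x')-s$ near each point of the compact level set $\{u=\gamma\}$: locally $\{u=s\}$ is a graph $x_1=\phi(x',s)$ with $\phi\in C^1$ jointly in $(x',s)$, so the local contribution $\int_D g\bigl(|\nabla u(\phi(x',s),x')|\bigr)\sqrt{1+|\nabla_{x'}\phi(x',s)|^2}\,\d x'$ depends continuously on $s$; a partition of unity then gives continuity of $\Psi$ near $\gamma$. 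Alternatively, if one is content with an a.e.-in-$\gamma$ version of the lemma (which is all that Lemma \ref{le4}'s coarea estimate actually requires), Lebesgue differentiation of the averaged identity already suffices, and the continuity discussion can be dropped entirely.
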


\begin{proof}By \eqref{eq6} or \eqref{eq7}, we observe that $|\nabla u|$ does not vanish near the boundary $\partial \Omega$, that is, there exists an open set $\mathcal{V}\supset \partial \Omega$ such that $|\nabla u|>0$ in $\mathcal{V}\cap \Omega$. Since $u\in C(\overline{\Omega})$ and $u>0$ in $\Omega$, it follows that there exists $\gamma_0>0$ such that $|\nabla u|>0$ in $\{0<u<\gamma_0\}$. By the implicit function theorem, we have that for every $\gamma\in (0, \gamma_0)$, $\{u>\gamma\}$ is an open subset of $\Omega$ with $\partial \{u>\gamma\}=\{u=\gamma\}$, and $\{u=\gamma\}$ is locally a $C^1$-hypersurface. Integrating \eqref{eq3} over $\{u>\gamma\}$, and applying Green's theorem (see, e.g., \cite{EvansMR3409135, TemamMR1846644}), we obtain
\begin{equation*}
  \int_{\{u=\gamma\}}g(|\nabla u|) \d \mathcal{H}_{N-1}(x)=\int_{\{u>\gamma\}}f(u)\d x\le \int_{\Omega} \left|f(u)\right|\d x\le C_0
\end{equation*}
for some constant $C_0>0$ independent of $\gamma$. This completes the proof.
\end{proof}

\begin{lemma}\label{le4}
  Let $\varepsilon > 0$ be given. Then there exists $\delta_0 > 0$ such that for any $\gamma \in (0, \delta_0)$, the following holds:
  \begin{equation*}
     \int_{\{u^t>\gamma\}}h(|\nabla u|)\d x-\int_{\{u>\gamma\}}h(|\nabla u|)\d x  \le \varepsilon t
  \end{equation*}
 for all sufficiently small $t\ge 0$.
\end{lemma}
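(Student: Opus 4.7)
The plan is to decompose the difference through the symmetric difference of the super-level sets and to confine it to a thin slab around $\{u=\gamma\}$. With $\Omega_\gamma:=\{u>\gamma\}$ and $\Omega^t_\gamma:=\{u^t>\gamma\}$, I would first write
\[
A(\gamma,t):=\int_{\Omega^t_\gamma}h(|\nabla u|)\,\d x-\int_{\Omega_\gamma}h(|\nabla u|)\,\d x=\int_{\Omega^t_\gamma\setminus\Omega_\gamma}h(|\nabla u|)\,\d x-\int_{\Omega_\gamma\setminus\Omega^t_\gamma}h(|\nabla u|)\,\d x.
\]
The pointwise estimate $|u^t(x)-u(x)|\leq LRt$ from Proposition \ref{pro0}(10) (with $L$ the Lipschitz constant of $u$ on $\R^N$ and $R$ chosen so that $\mathrm{supp}\,u\subset B_R$) forces both symmetric-difference pieces to lie in the slab $S_t:=\{\gamma-LRt<u\leq\gamma+LRt\}$. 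For $\gamma<\delta_0$ and $t$ small enough, $S_t$ sits inside the near-boundary open set $\mathcal{V}\cap\Omega$ provided by \eqref{eq6} or \eqref{eq7}, on which $|\nabla u|$ is uniformly controlled. The argument then branches according to whether $\mathbf{c}>0$ or $\mathbf{c}=0$.

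In the non-degenerate regime of Theorem \ref{th1} ($\mathbf{c}>0$), I would exploit equimeasurability. Splitting $h(|\nabla u|)=h(\mathbf{c})+(h(|\nabla u|)-h(\mathbf{c}))$ and using $|\Omega^t_\gamma|=|\Omega_\gamma|$ from Proposition \ref{pro0}(1), the constant $h(\mathbf{c})$ contributes zero and one is left with
\[
A(\gamma,t)\leq \omega(\eta')\,|\Omega^t_\gamma\triangle\Omega_\gamma|,
\]
where $\omega(\eta')$ is the modulus of continuity of $h$ at $\mathbf{c}$ and $\eta'$ is the slack in \eqref{eq6}. Taking $\eta'<\mathbf{c}/2$ gives $|\nabla u|\geq\mathbf{c}/2$ on $S_t$, so the coarea formula together with Lemma \ref{le3}, which in this range yields $\mathcal{H}_{N-1}(\{u=s\})\leq C_0/g(\mathbf{c}/2)$, produces $|S_t|\leq C\,t$ for a constant independent of $\gamma$ and $t$. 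Choosing $\eta'$, and thereby $\delta_0$, small enough that $\omega(\eta')\,C\leq\varepsilon$ then closes the first case.

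In the degenerate regime of Theorem \ref{th2} ($\mathbf{c}=0$) the lower bound on $|\nabla u|$ is lost and $|S_t|$ need not be $O(t)$, so equimeasurability alone is inadequate. Instead I would discard the non-positive subtrahend and use the elementary inequality $h(z)\leq z g(z)$ (since $h=zg-G$ and $G\geq 0$) together with the coarea formula to bound
\[
A(\gamma,t)\leq \int_{\Omega^t_\gamma\setminus\Omega_\gamma}h(|\nabla u|)\,\d x\leq \int_{S_t}|\nabla u|\,g(|\nabla u|)\,\d x=\int_{\gamma-LRt}^{\gamma+LRt}F(s)\,\d s,
\]
where $F(s):=\int_{\{u=s\}}g(|\nabla u|)\,\d\mathcal{H}_{N-1}$. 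The identity from the proof of Lemma \ref{le3} gives $F(s)=\int_{\{u>s\}}f(u)\,\d x$, hence $F(0^+)=\int_\Omega f(u)\,\d x$. A Pohozaev-type identity then yields $F(0^+)=0$ in this regime: formally this is the boundary integral $\int_{\partial\Omega}g(|\nabla u|)\,\d\sigma$, which vanishes because $|\nabla u|\to 0$ at $\partial\Omega$ by \eqref{eq7} and $g(0)=0$; rigorously, one extracts it by testing the weak formulation \eqref{eq4} against a Lipschitz cutoff of $\chi_\Omega$ built from $\min(u/\delta,1)$ and using \eqref{eq7} to drive the boundary-layer flux to zero. Continuity of $F$ at $0^+$ then lets me pick $\delta_0$ so small that $F(s)\leq\varepsilon/(2LR)$ on $(0,2\delta_0)$, whence $A(\gamma,t)\leq\varepsilon t$ for all $\gamma\in(0,\delta_0)$ and sufficiently small $t\geq 0$.

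The principal obstacle is the rigorous verification of $F(0^+)=0$ in the setting of Theorem \ref{th2}, since $\partial\Omega$ is not assumed smooth and only the weak formulation \eqref{eq4} combined with the boundary control \eqref{eq7} is available; controlling the boundary-layer flux requires careful use of both. Granting this flux identity, both cases follow the same template: localize to the thin level-set slab $S_t$ and extract $\varepsilon$-smallness either from the modulus of continuity of $h$ at $\mathbf{c}$ (Theorem \ref{th1}) or from the vanishing flux $F(0^+)$ (Theorem \ref{th2}).
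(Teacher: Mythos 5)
Your Case 1 argument ($\mathbf{c}>0$) is essentially the paper's: equimeasurability cancels the constant $h(\mathbf{c})$, the symmetric difference sits in a thin slab around $\{u=\gamma\}$, and the coarea formula together with Lemma~\ref{le3} and the lower bound $|\nabla u|\ge \mathbf{c}/2$ gives $|S_t|=O(t)$ with a constant independent of $\gamma$, so the $\varepsilon$-smallness comes from the continuity of $h$ at $\mathbf{c}$.

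Your Case 2 argument ($\mathbf{c}=0$), however, has a genuine gap, and it is precisely the one you flag. You localize to a slab of fixed $u$-width $O(t)$ and then need $F(s):=\int_{\{u=s\}}g(|\nabla u|)\,\d\mathcal{H}_{N-1}\to 0$ as $s\to 0^+$, i.e.\ $\int_\Omega f(u)\,\d x=0$. But the cutoff computation you sketch is circular: testing \eqref{eq4} with $\min(u/\delta,1)$ gives, via the coarea formula, $\frac{1}{\delta}\int_0^\delta F(s)\,\d s=\int_\Omega f(u)\min(u/\delta,1)\,\d x$, and letting $\delta\to0$ both sides converge to $F(0^+)=\int_\Omega f(u)\,\d x$, yielding a tautology. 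Trying instead to bound the left side by $g(\varepsilon_\delta)\varepsilon_\delta\,|\{0<u<\delta\}|/\delta$ fails because, for a general bounded domain (no smoothness, possibly infinite perimeter), $|\{0<u<\delta\}|/\delta$ and $\mathcal{H}_{N-1}(\{u=s\})$ are not controlled and can blow up as the gradient vanishes; the equation gives only the uniform bound $F(s)\le C_0$ from Lemma~\ref{le3}, not decay. So there is no evident way to establish $F(0^+)=0$ from the hypotheses, and the proof as written does not go through. The paper avoids this entirely by a different mechanism: instead of shrinking $F$, it shrinks the slab. Using \eqref{eq7}, one has $|\nabla u|\le \varepsilon/(2C_0C_1)$ on $\{\gamma/2\le u\le 2\gamma\}$ for $\gamma$ small; combined with the Euclidean-distance inclusion $\{u^t>\gamma\}\triangle\{u>\gamma\}\subset\{\operatorname{dist}(x,\{u=\gamma\})<C_1t\}$ and the mean value theorem, this confines the symmetric difference to a slab of $u$-width only $\varepsilon t/C_0$ (not $O(t)$). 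Then $h\le zg(z)$, the coarea formula, and the mere boundedness $F(s)\le C_0$ immediately give $A(\gamma,t)\le C_0\cdot(\varepsilon t/C_0)=\varepsilon t$. In short, the $\varepsilon$-gain should come from the smallness of $|\nabla u|$ (which narrows the $u$-slab), not from a vanishing-flux identity $F(0^+)=0$, which is unavailable in this nonsmooth setting.
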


\begin{proof}
Fix $\gamma>0$. By the definition of CStS, we have
\begin{equation}\label{add1}
  \{u^t>\gamma\} \bigtriangleup \{u>\gamma\} \subset \{x\in \R^N: \operatorname {dist}(x, \{u=\gamma\})<C_1t \},\ \ \ \forall\, t\ge 0,
\end{equation}
for some constant $C_1>0$ depending only on $u$. Here the symbol $\bigtriangleup$ means the symmetric difference. Recall that $u$ is Lipschitz continuous in $\R^N$. Denote its Lipschitz constant by $L$. It follows from \eqref{add1} that
\begin{equation}\label{add2}
\begin{split}
  \{u^t>\gamma\} \bigtriangleup \{u>\gamma\} & \subset \{x\in \R^N: \operatorname {dist}(x, \{u=\gamma\})<C_0t \} \\
     &   \subset \{\gamma-C_1Lt\le u\le \gamma+C_1Lt \}\subset \{\gamma/2\le u \le 2\gamma\}
\end{split}
\end{equation}
for all $t\in [0, \gamma/(2C_1L)]$. Here, $\operatorname {dist}(x, \{u=\gamma\}):=\inf\{|x-y|: y\in \{u=\gamma\}\}$. For clarity, we continue our proof by dealing with two cases separately in the sequel.

\textbf{Case 1}: Let $u$ be as given in Theorem \ref{th1}. Since $\mathcal{L}^N(\{u^t>\gamma\})=\mathcal{L}^N(\{u>\gamma\})$, we have
\begin{equation}\label{add3}
\begin{split}
      & \int_{\{u^t>\gamma\}}h(|\nabla u|)\d x-\int_{\{u>\gamma\}}h(|\nabla u|)\d x \\
     & =\int_{\{u^t>\gamma\}}\left[h(|\nabla u|)-h(\textbf{c})\right]\d x-\int_{\{u>\gamma\}}\left[h(|\nabla u|)-h(\textbf{c})\right]\d x\\
     &\le \int_{ \{u^t>\gamma\} \bigtriangleup \{u>\gamma\} }\left|h(|\nabla u|)-h(\textbf{c})\right|\d x.
\end{split}
\end{equation}
There exists $\delta_1>0$ such that for any $\gamma\in (0, \delta_1)$, we have
\begin{equation}\label{add4}
  |\nabla u(x)|\ge \textbf{c}/2,\ \ \ \left|h(|\nabla u(x)|)-h(\textbf{c})\right|\le \frac{g(\textbf{c}/2)\textbf{c}\varepsilon}{5C_0C_1L}
\end{equation}
for all $x\in \{\gamma/2\le u \le 2\gamma\}$, where $C_0$ is as given in Lemma \ref{le3}. By applying the co-area formula (see \cite{EvansMR3409135}) and combining \eqref{add2}, \eqref{add3}, and \eqref{add4}, we obtain that for any sufficiently small $\gamma$, the following holds:
\begin{equation*}
\begin{split}
    & \int_{\{u^t>\gamma\}}h(|\nabla u|)\d x-\int_{\{u>\gamma\}}h(|\nabla u|)\d x \\
     &\le \frac{g(\textbf{c}/2)\textbf{c}\varepsilon}{5C_0C_1L} \int_{\gamma+C_1Lt}^{\gamma-C_1Lt}\d s \int_{\{u=s\}}\frac{1}{|\nabla u(x)|}\d \mathcal{H}_{N-1}(x) \\
     & \le \frac{g(\textbf{c}/2)\textbf{c}\varepsilon}{5C_0C_1L} \cdot \frac{2}{g(\textbf{c}/2)\textbf{c}}\int_{\gamma+C_1Lt}^{\gamma-C_1Lt}\d s \int_{\{u=s\}}g(|\nabla u(x)|)\d \mathcal{H}_{N-1}(x)\\
     & \le \frac{g(\textbf{c}/2)\textbf{c}\varepsilon}{5C_0C_1L} \cdot \frac{2}{g(\textbf{c}/2)\textbf{c}}\cdot 2C_1Lt\cdot C_0\le \varepsilon t
\end{split}
\end{equation*}
for all sufficiently small $t\ge 0$. This completes the proof of this case.

\smallskip
\textbf{Case 2}: Let $u$ be as given in Theorem \ref{th2}. Recalling \eqref{eq7}, there exists $\delta_2>0$ such that for any $\gamma\in (0, \delta_2)$, we have
\begin{equation*}
  |\nabla u(x)|\le \varepsilon/(2C_0C_1)
\end{equation*}
for all $x\in \{\gamma/2\le u \le 2\gamma\}$, where $C_0$ is as given in Lemma \ref{le3}. By the mean value theorem, it follows that for any $\gamma\in (0, \delta_2)$,
\begin{equation}\label{add5}
\begin{split}
    \{u^t>\gamma\} \bigtriangleup \{u>\gamma\} & \subset \{x\in \R^N: \operatorname {dist}(x, \{u=\gamma\})<C_1t \} \\
     & \subset \{\gamma-{\varepsilon t}/{(2C_0)}\le u\le \gamma+{\varepsilon t}/{(2C_0)} \}
\end{split}
\end{equation}
for all $t\in [0, \gamma/(2C_1L)]$. By applying the co-area formula and combining Lemma \ref{le3} with \eqref{add5}, we obtain that for any sufficiently small $\gamma$, the following holds:
\begin{equation*}
\begin{split}
   \int_{\{u^t>\gamma\}}h(|\nabla u|)\d x-\int_{\{u>\gamma\}}h(|\nabla u|)\d x &\le \int_{ \{u^t>\gamma\} \bigtriangleup \{u>\gamma\} } |\nabla u|g(|\nabla u|)\d x\\
     &\le \int_{\gamma-{\varepsilon t}/{(2C_0)}}^{\gamma+{\varepsilon t}/{(2C_0)}}\d s \int_{\{u=s\}}g\left(|\nabla u| \right)\d \mathcal{H}_{N-1}(x)\\
     &\le C_0\cdot \frac{\varepsilon t}{2C_0}\cdot 2 \le \varepsilon t
\end{split}
\end{equation*}
for all sufficiently small $t\ge 0$. This completes the proof of this case, thereby concluding the proof of the lemma.
\end{proof}

\begin{lemma}\label{le5}
 Let $f$ be a function satisfying assumption $(\mathcal{A})$. Let $\varepsilon > 0$ be given. Then there exists $\delta_0 > 0$ such that for any $\gamma \in (0, \delta_0)$, the following holds:
 \begin{equation*}
   \int_{\Omega}f(u)\left[(u-\gamma)_+^t- (u-\gamma)_+\right]\d x\ge -\varepsilon t+o(t)
 \end{equation*}
for all $t\ge 0$. Here the symbol $o(t)$ denotes any quantities satisfying $\lim_{t\to 0} o(t)/t=0$.
\end{lemma}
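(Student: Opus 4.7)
The plan is to split $f=f_1+f_2$ according to Assumption $(\mathcal{A})$. The contribution of the continuous piece $f_1$ will be shown to be $o(t)$ by a layer-cake argument, and the bounded-variation piece $f_2$ will be handled by an $L^1(\d\rho)$-approximation by a continuous function, where $\rho$ is the image measure of $u|_\Omega$; this is where the $\varepsilon t$ budget is spent. Two preliminary estimates will be used throughout. Since $u$ is Lipschitz on $\R^N$ with some constant $L$ and $\operatorname{supp} u\subset B_R$, Proposition \ref{pro0} (10) yields the pointwise bound $|(u-\gamma)_+^t - (u-\gamma)_+| \le |u^t - u| \le LRt$ on $B_R$. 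Moreover, for $t<\gamma/(LR)$ we have $u^t \le LRt < \gamma$ on $\R^N\setminus \Omega$, so $(u^t-\gamma)_+$ vanishes outside $\Omega$ and the integral over $\Omega$ equals the integral over $\R^N$, permitting Cavalieri-type identities to be applied freely.

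For the continuous piece $f_1$, I would apply the layer-cake representation
\[(u-\gamma)_+^t - (u-\gamma)_+ = \int_\gamma^\infty \bigl[\mathbf{1}_{\{u^t>s\}}-\mathbf{1}_{\{u>s\}}\bigr]\,\d s\]
together with Fubini. For each fixed $s$, equimeasurability $\LL^N(\{u^t>s\})=\LL^N(\{u>s\})$ permits the subtraction of the constant $f_1(s)$:
\[\int_{\R^N} f_1(u)\bigl[\mathbf{1}_{\{u^t>s\}}-\mathbf{1}_{\{u>s\}}\bigr]\,\d x = \int_{\R^N}[f_1(u)-f_1(s)]\bigl[\mathbf{1}_{\{u^t>s\}}-\mathbf{1}_{\{u>s\}}\bigr]\,\d x.\]
The argument of Lemma \ref{le4} gives $\{u^t>s\}\bigtriangleup\{u>s\} \subset \{|u-s|\le LC_1 t\}$ for some $C_1>0$ depending only on $u$, so $|f_1(u)-f_1(s)|\le \omega_{f_1}(LC_1 t)$ on the support of the integrand, where $\omega_{f_1}$ is the modulus of continuity of $f_1$ on $[0,\|u\|_\infty]$. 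Integrating in $s$ and using $\int_\gamma^\infty|\{u^t>s\}\bigtriangleup\{u>s\}|\,\d s \le \int_{\R^N}|u^t-u|\,\d x \le LRt\cdot\LL^N(B_R)$ yields
\[\Bigl|\int_\Omega f_1(u)[(u-\gamma)_+^t-(u-\gamma)_+]\,\d x\Bigr| \le \omega_{f_1}(LC_1 t)\cdot LRt\cdot \LL^N(B_R) = o(t).\]

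For $f_2$ the same modulus-of-continuity argument breaks down at jumps. Instead, I would exploit that $f_2$ is bounded on $[0,\|u\|_\infty]$ and that $\rho:=(u|_\Omega)_*\LL^N$ is a finite Borel measure on $[0,\|u\|_\infty]$, so continuous functions are dense in $L^1(\d\rho)$. Given $\varepsilon>0$, choose continuous $\tilde f$ on $[0,\|u\|_\infty]$ with
\[\int_\Omega|f_2(u)-\tilde f(u)|\,\d x = \int|f_2-\tilde f|\,\d\rho < \frac{\varepsilon}{LR}.\]
Combined with $|(u-\gamma)_+^t-(u-\gamma)_+|\le LRt$, this gives
\[\Bigl|\int_\Omega[f_2-\tilde f](u)[(u-\gamma)_+^t-(u-\gamma)_+]\,\d x\Bigr| \le LRt\cdot \frac{\varepsilon}{LR} = \varepsilon t,\]
while the previous paragraph's argument applied to the continuous $\tilde f$ gives $\int_\Omega \tilde f(u)[(u-\gamma)_+^t-(u-\gamma)_+]\,\d x = o(t)$. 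Adding these and the $f_1$-contribution yields $\int_\Omega f(u)[(u-\gamma)_+^t-(u-\gamma)_+]\,\d x \ge -\varepsilon t + o(t)$. The principal difficulty is precisely the BV piece: a naive layer-cake estimate on the jump part gives only $O(t)$ with a constant proportional to the total variation $V(f_2)$, which does not shrink with $\gamma$; the $L^1(\d\rho)$-approximation is the trick that rescues the argument by trading this jump-induced error for a small $L^1$ distance, tunable by $\varepsilon$.
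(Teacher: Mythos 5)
Your proof is correct but follows a genuinely different route from the paper's. The paper first replaces $f(u)$ by $f\bigl((u-\gamma)_+ + \gamma\bigr) = f(\max(u,\gamma))$ inside the integral, paying for this with an error $I_1$ supported on $\{0<u<\gamma\}$; this error is where the $\varepsilon t$ budget is spent, via $\LL^N(\{0<u<\gamma\})\to 0$, and is what forces the smallness condition $\gamma<\delta_0$. The remaining term $I_2 = \int \tilde f(v)(v^t-v)\,\d x$ with $v=(u-\gamma)_+$, $\tilde f(\cdot)=f(\cdot+\gamma)$, is then treated by a Jordan decomposition $\tilde f=\tilde f_1 + \tilde f_2 + \tilde f_3$ into a continuous, a nondecreasing, and a nonincreasing piece: the continuous piece is $o(t)$ via the Cavalieri identity and the modulus of continuity, the nondecreasing piece is $o(t)$ via $L^1$-continuity of the CStS flow (Proposition \ref{pro0} (3) and (6)), and the nonincreasing piece is $\ge 0$ by a sign argument. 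You instead keep the decomposition $f=f_1+f_2$ of assumption $(\mathcal{A})$ directly: the continuous piece is handled by a layer-cake representation combined with equimeasurability (spiritually the same as the paper's Cavalieri step, but avoiding the antiderivatives $F_i$), while the BV piece is handled by approximating $f_2$ by a continuous $\tilde f$ in $L^1(\d\rho)$, $\rho=(u|_\Omega)_*\LL^N$, with the approximation error furnishing the $\varepsilon t$ term. This is a clean alternative to the monotone-piece sign analysis; it sidesteps the Jordan decomposition entirely. One notable structural consequence is that your bound is uniform in $\gamma$: since the $\varepsilon t$ budget is spent on the $L^1(\d\rho)$-approximation rather than on the boundary layer $\{0<u<\gamma\}$, your argument proves the estimate for \emph{every} $\gamma>0$, so the role of $\delta_0$ becomes vacuous (any $\delta_0$ works). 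That is slightly stronger than what the paper proves, and harmless for the downstream application. Both proofs rely on the same inputs from Proposition \ref{pro0}: the pointwise bound $|(u-\gamma)_+^t-(u-\gamma)_+|\le LRt$ from item (10), commutativity from item (3), and (in the paper's version) $L^1$-continuity from item (6).
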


\begin{proof}
It suffices to consider $\gamma\in (0, \sup_{\R^N}u)$. Recall that $u$ is Lipschitz continuous in $\R^N$. Denote its Lipschitz constant by $L$. For any $\gamma>0$, the function $(u-\gamma)_+$ has Lipschitz constant less than or equal to $L$. In view of Proposition \ref{pro0} (10), there exists a constant $C_0>0$ such that
\begin{equation}\label{3-9}
  |(u(x)-\gamma)_+^t-(u(x)-\gamma)_+|\le C_0t,\ \ \ \forall\, x\in \R^N,\ \gamma>0,\ t\ge 0.
\end{equation}
 We calculate that
\begin{equation}\label{3-10}
\begin{split}
    &  \int_{\Omega}f(u)\left[(u-\gamma)_+^t- (u-\gamma)_+\right]\d x= \int_{\Omega}\left[f(u)-f((u-\gamma)_++\gamma )\right]\left[(u-\gamma)_+^t- (u-\gamma)_+\right]\d x \\
     &\ \ \  +  \int_{\Omega}f((u-\gamma)_++\gamma )\left[(u-\gamma)_+^t- (u-\gamma)_+\right]\d x=:I_1(t)+I_2(t).
\end{split}
\end{equation}
Set $C_1:=\sup\left\{|f(z)|: z\in[0, \sup_{\R^N} u]\right\}$. For $I_1(t)$, by \eqref{3-9} we have
\begin{equation*}
\begin{split}
    \left|I_1(t)\right| & = \int_{\Omega}\left|f(u)-f((u-\gamma)_++\gamma )\right|\left|(u-\gamma)_+^t- (u-\gamma)_+\right|\d x \\
     & \le 2C_1C_0 t \mathcal{L}^N\left(\{0<u<\gamma\}\right).
\end{split}
\end{equation*}
We now turn to $I_2(t)$. We will show that $I_2(t)\ge o(t)$ as $t\to 0$. To simplify the notation, we set $\tilde{f}(z):=f(z+\gamma)$, for $z\in[0, +\infty)$, and $v:=(u-\gamma)_+$. Recalling assumption $(\mathcal{A})$, we apply Jordan's decomposition theorem to obtain the decomposition $\tilde{f} = \tilde{f}_1+\tilde{f}_2+\tilde{f}_3$, where $\tilde{f}_1$ is continuous, $\tilde{f}_2$ is nondecreasing, and $\tilde{f}_3$ is nonincreasing. Moreover, we may assume that $\tilde{f}_i$ are all bounded and $f_2(0)=0$, since $u$ is bounded in $\Omega$. Hence, $I_2(t)$ can be split into
\begin{equation*}
I_2(t)=\int_{\Omega}\tilde{f}(v)\left(v^t- v\right)\d x =\sum_{i=1}^{3}\int_{\Omega}\tilde{f}_i(v)\left(v^t- v\right)\d x =:\sum_{i=1}^{3}I_{2}^{(i)}(t).
\end{equation*}
Set $\tilde{F}_i(z)=\int_{0}^{z}\tilde{f}_i(s)\d s,\ i=1, 2, 3$. Observe that the support of $v^t$ is contained in $\Omega$ for all sufficiently small $t$. By Proposition \ref{pro0} (5), we have that
\begin{equation*}
  0=\int_\Omega \tilde{F}_i(v^t)\d x-\int_\Omega \tilde{F}_i(v^t)\d x=\int_\Omega\left( \int_{0}^{1}\tilde{f}_i\left(v(x)+\theta (v^t(x)-v(x)) \right)    \right)   \left(v^t(x)- v(x)\right) \d x.
\end{equation*}
Let us first consider $I_{2}^{(1)}(t)$. We have
\begin{equation}\label{3-12}
\begin{split}
 & I_{2}^{(1)}(t)  =\int_{\Omega}\tilde{f}_1(v)\left(v^t- v\right)\d x \\
     &=\int_{\Omega}\tilde{f}_1(v)\left(v^t- v\right)\d x- \int_\Omega\left( \int_{0}^{1}\tilde{f}_1\left(v(x)+\theta (v^t(x)-v(x)) \right) \d \theta   \right)   \left(v^t(x)- v(x)\right) \d x \\
     &= \int_{\Omega} \left[\int_{0}^{1}\left(\tilde{f}_1(v(x))- \tilde{f}_1\left(v(x)+\theta (v^t(x)-v(x))  \right) \right)\d \theta  \right] \left(v^t(x)- v(x)\right) \d x\\
     & \ge - \sup\left\{|\tilde{f}_1(v(x))- \tilde{f}_1\left(v(x)+\theta (v^t(x)-v(x))  \right)|: x\in \Omega,\ \theta\in [0, 1]  \right\} \cdot C_0t\cdot \mathcal{L}^N(\Omega)\\
     &\ge o(t),\ \ \ \text{as}\ t\to 0.
\end{split}
\end{equation}
Similarly, for $I_{2}^{(2)}(t)$, we have
\begin{equation}\label{3-13}
\begin{split}
 & I_{2}^{(2)}(t)   =\int_{\Omega}\tilde{f}_2(v)\left(v^t- v\right)\d x \\
     &=\int_{\Omega}\tilde{f}_2(v)\left(v^t- v\right)\d x- \int_\Omega\left( \int_{0}^{1}\tilde{f}_2\left(v(x)+\theta (v^t(x)-v(x)) \right) \d \theta   \right)   \left(v^t(x)- v(x)\right) \d x \\
     &= \int_{\Omega} \left[\tilde{f}_2(v(x))-\int_{0}^{1} \tilde{f}_2\left(v(x)+\theta (v^t(x)-v(x))  \right) \d \theta  \right] \left(v^t(x)- v(x)\right) \d x\\
     & \ge - \int_{\Omega} \left|\tilde{f}_2(v^t(x))- \tilde{f}_2(v(x))\right|\left|v^t(x)- v(x)\right| \d x\\
     & \ge -\|\tilde{f}_2(v^t)-\tilde{f}_2(v)\|_{L^1(\Omega)} \cdot C_0 t\cdot \mathcal{L}^N(\Omega) \\
     &\ge o(t),\ \ \ \text{as}\ t\to 0,
\end{split}
\end{equation}
where we have used that the monotonicity of $\tilde{f}_2$ and the fact that $\|\tilde{f}_2(v^t)-\tilde{f}_2(v)\|_{L^1(\Omega)}\to 0$ as $t\to 0$, due to Proposition \ref{pro0} (3) and (6). Finally, for $I_{2}^{(3)}(t)$, we have

\begin{equation}\label{3-14}
\begin{split}
 & I_{2}^{(3)}(t)  =\int_{\Omega}\tilde{f}_3(v)\left(v^t- v\right)\d x \\
     &=\int_{\Omega}\tilde{f}_3(v)\left(v^t- v\right)\d x- \int_\Omega\left( \int_{0}^{1}\tilde{f}_3\left(v(x)+\theta (v^t(x)-v(x)) \right) \d \theta   \right)   \left(v^t(x)- v(x)\right) \d x \\
     &= \int_{\Omega} \left[\tilde{f}_3(v(x))-\int_{0}^{1} \tilde{f}_3\left(v(x)+\theta (v^t(x)-v(x))  \right) \d \theta  \right] \left(v^t(x)- v(x)\right) \d x\\
     & \ge 0,
\end{split}
\end{equation}
where we have used the fact that $\tilde{f}_3$ is nonincreasing. Combining \eqref{3-12}, \eqref{3-13} and \eqref{3-14}, we conclude that $I_2(t)\ge o(t)$ as $t\to 0$. Therefore, we arrive
\begin{equation}\label{3-15}
\begin{split}
    \int_{\Omega}f(u)\left[(u-\gamma)_+^t- (u-\gamma)_+\right]\d x &=I_1(t)+I_2(t) \\
     & \ge -2C_1C_0 t \mathcal{L}^N\left(\{0<u<\gamma\}\right)+o(t).
\end{split}
\end{equation}
Note that $\mathcal{L}^N\left(\{0<u<\gamma\}\right)\to 0$ as $\gamma \to 0$. For some $\delta_0 > 0$, it holds that for any $\gamma \in (0, \delta_0)$,
\begin{equation*}
  2C_1C_0\mathcal{L}^N\left(\{0<u<\gamma\}\right)<\varepsilon.
\end{equation*}
Thus, by \eqref{3-15}, we deduce that
 \begin{equation*}
   \int_{\Omega}f(u)\left[(u-\gamma)_+^t- (u-\gamma)_+\right]\d x\ge -\varepsilon t+o(t)
 \end{equation*}
for all $t\ge 0$. The proof is thereby complete.
\end{proof}

\subsection{Proofs of Theorems \ref{th1} and \ref{th2}}

We are now in a position to prove Theorems \ref{th1} and \ref{th2}.

\smallskip

\noindent \textbf{Proof of Theorems \ref{th1} and \ref{th2}}:

As explained at the beginning of this section, it suffices to prove that $u$ is locally symmetric. This can be achieved if we can show that $(u - \gamma_0)_+$ is locally symmetric for any $\gamma_0>0$. We will show that $(u - \gamma_0)_+$ is locally symmetric in the direction $x_1$. The local symmetry in other directions can be established in a similar manner. Recalling Definition \ref{csts}, let $(u - \gamma_0)_+^t$ denote the CStS of $(u - \gamma_0)_+$ with respect to $x_1$. Thanks to Proposition \ref{pro2}, it suffices to prove that
\begin{equation*}
   \int_{\R^N}G\left(|\nabla (u-\gamma_0)_+^t|\right)\d x-\int_{\R^N}G\left(|\nabla (u-\gamma_0)_+|\right)\d x= o(t)
\end{equation*}
as $t\to 0$. By Proposition \ref{pro0} (10), we have
\begin{equation*}
  \int_{\R^N}G\left(|\nabla (u-\gamma_0)_+^t|\right)\d x-\int_{\R^N}G\left(|\nabla (u-\gamma_0)_+|\right)\d x\le 0.
\end{equation*}
So it is reduced to proving that
\begin{equation*}
  \int_{\R^N}G\left(|\nabla (u-\gamma_0)_+^t|\right)\d x-\int_{\R^N}G\left(|\nabla (u-\gamma_0)_+|\right)\d x\ge o(t)
\end{equation*}
as $t\to 0$. This can be proven by showing that for any $\varepsilon>0$, the following inequality holds:
\begin{equation}\label{3-20}
 \int_{\R^N}G\left(|\nabla (u-\gamma_0)_+^t|\right)\d x-\int_{\R^N}G\left(|\nabla (u-\gamma_0)_+|\right)\d x\ge -\varepsilon t
\end{equation}
for all sufficiently small $t$. By Lemma \ref{le1}, \eqref{3-20} can be established by showing that there exists $\gamma_1\in(0, \gamma_0)$ such that
\begin{equation}\label{3-23}
   \int_{\R^N}G\left(|\nabla (u-\gamma_1)_+^t|\right)\d x-\int_{\R^N}G\left(|\nabla (u-\gamma_1)_+|\right)\d x\ge -\varepsilon t
\end{equation}
for all sufficiently small $t$. Next, we focus on demonstrating the existence of such a $\gamma_1$. Note that $[(u-\gamma_1)_+^t-(u-\gamma_1)_+]\in H_0^1(\Omega)$ if $t$ is small enough. Recalling \eqref{eq4}, let $[(u-\gamma_1)_+^t-(u-\gamma_1)_+]$ be a test function, we then obtain
\begin{equation*}
  \int_\Omega g(|\nabla u|)\frac{\nabla u \cdot \nabla[(u-\gamma_1)_+^t-(u-\gamma_1)_+]}{|\nabla u|}\,\d x=\int_{\Omega} f(u)[(u-\gamma_1)_+^t-(u-\gamma_1)_+]\,\d x,
\end{equation*}
where the integrand on the left-hand side is understood to be zero at each point $x$ where $\nabla u(x)=0$. It follows that
\begin{equation}\label{3-24}
  \int_{\{u^t>\gamma_1\}} g(|\nabla u|)|\nabla u^t|\,\d x - \int_{\{u>\gamma_1\}} g(|\nabla u|)|\nabla u|\,\d x\ge \int_{\Omega} f(u)[(u-\gamma_1)_+^t-(u-\gamma_1)_+]\,\d x.
\end{equation}
By Lemmas \ref{le4} and \ref{le5}, we deduce that there exists $\delta_0 > 0$ such that for any $\gamma \in (0, \delta_0)$, the following hold:
 \begin{equation}\label{3-25}
 \begin{split}
   & \int_{\{u^t>\gamma\}}h(|\nabla u|)\d x-\int_{\{u>\gamma\}}h(|\nabla u|)\d x  \le \frac{\varepsilon}{3} t, \\
    & \int_{\Omega}f(u)\left[(u-\gamma)_+^t- (u-\gamma)_+\right]\d x \ge -\frac{\varepsilon}{3} t+o(t)
 \end{split}
 \end{equation}
for all sufficiently small $t$. Now, let us fix $\gamma_1<\min\{\delta_0, \gamma_0\}$ so that \eqref{3-25} holds with $\gamma=\gamma_1$. Combining \eqref{3-24}, \eqref{3-25}, and Lemma \ref{le2}, we derive that
\begin{equation*}
  \begin{split}
  \int_{\R^N}&G\left(|\nabla (u-\gamma_1)_+^t|\right)\d x-\int_{\R^N}G\left(|\nabla (u-\gamma_1)_+|\right)\d x\\
      &  = \int_{\{u^t>\gamma_1\}}G(|\nabla u^t|)\d x-\int_{\{u>\gamma_1\}}G(|\nabla u|)\d x \\
       & \ge \int_{\{u^t>\gamma_1\}}g(|\nabla u|)|\nabla u^t|\d x-\int_{\{u>\gamma_1\}}g(|\nabla u|)|\nabla u|\d x\\
       &\ \ \ \ \ \ \ \ \ -\left(\int_{\{u^t>\gamma_1\}}h(|\nabla u|)\d x-\int_{\{u>\gamma_1\}}h(|\nabla u|)\d x \right)\\
       &   \ge  -\frac{\varepsilon}{3} t+o(t)-\frac{\varepsilon}{3} t=-\frac{2}{3}\varepsilon t+o(t)\ge -\varepsilon t
  \end{split}
  \end{equation*}
for all sufficiently small $t$, which implies \eqref{3-23}. The proof is thereby complete.

\section{Proof of Theorem \ref{th3}}\label{s4}

This section is dedicated to the proof of Theorem \ref{th3}. Let $u$ be as given in Theorem \ref{th3}. For convenience, we extend $u$ by setting $u\equiv \eta$ in $\overline{\Omega_1}$, and $u\equiv 0$ in $\R^N\backslash\Omega_0$, while retaining the notation $u$. It follows from \eqref{eq14} that $u$ is Lipschitz in $\R^N$.

Our goal is to show that both $\Omega_0$ and $\Omega_1$ are balls, which may not be concentric. Without loss of generality, assume that $\eta = 1$. Observe that if $u$ is locally symmetric, then the super-level set $\Omega_0=\{u>0\}$ consists of a countable union of mutually disjoint balls. Since $\Omega_0$ is connected, it must be a single ball. On the other hand, for each positive integer $n$, the super-level set $\{u>1-1/n\}$ consists of at most countably many disjoint open balls, among which there is a unique open ball that contains $\Omega_1$, denoted by $B^{(n)}$. Clearly, we have $B^{(1)}\supset B^{(2)}\supset \cdots \supset B^{(n)}\supset \cdots$. We check that $\overline{\Omega_1}=\cap_{n=1}^\infty B^{(n)}$ is a closed ball, which, by the definition of a ring-shaped domain, implies that $\Omega_1$ must be an open ball. Thus, it remains to establish the local symmetry of $u$. This can be achieved using a method analogous to that of the previous section, with necessary modifications. Since the ring-shaped domain $\Omega$ has two boundaries, namely the inner boundary $\partial \Omega_1$ and the outer boundary $\partial \Omega_0$, truncation must be performed on both. Apart from this, no significant differences arise compared to the previous section. Hence, we only sketch the proof, as the details follow from straightforward modifications based on the ideas developed earlier.


As in the preceding section, we first establish several auxiliary technical lemmas. Since their proofs closely follow those in the previous section, we omit the details.

Recalling Definition \ref{csts}, let $u^t$ denote the CStS of $u$ with respect to $x_1$. Let $G$ and $h$ be defined as in the preceding section. We define the truncation operator as follows:
\begin{equation*}
  \mathcal{T}_\gamma^\beta[u]=\min\{\beta, (u-\gamma)_+\},\ \ \ \beta>\gamma>0.
\end{equation*}

\begin{lemma}[Monotonicity lemma]\label{le6}
Let $\beta_1, \beta_0, \gamma_0$, and $\gamma_1$ be positive constants satisfying $\beta_1\ge \beta_0\ge \gamma_0\ge \gamma_1$. Then
\begin{equation*}
\begin{split}
     \int_{\R^N}G\left(\left|\nabla\left(\mathcal{T}_{\gamma_0}^{\beta_0}[u]\right)^t\right|\right)\d x& -\int_{\R^N}G\left(\left|\nabla\mathcal{T}_{\gamma_0}^{\beta_0}[u]\right|\right)\d x \\
     & \ge  \int_{\R^N}G\left(\left|\nabla\left(\mathcal{T}_{\gamma_1}^{\beta_1}[u]\right)^t\right|\right)\d x -\int_{\R^N}G\left(\left|\nabla\mathcal{T}_{\gamma_1}^{\beta_1}[u]\right|\right)\d x
\end{split}
\end{equation*}
for all $t\ge0$.
\end{lemma}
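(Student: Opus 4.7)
The plan is to mirror the proof of Lemma \ref{le1} one section back, replacing the single cut–off $(u-\gamma)_+$ by the double cut–off $\mathcal{T}_\gamma^\beta[u]$. First I would put the integrals into slab form. Since $\phi_{\gamma,\beta}(s):=\min\{\beta,(s-\gamma)_+\}$ is bounded, nondecreasing and vanishes at $s=0$, Proposition \ref{pro0}(3) yields $(\mathcal{T}_\gamma^\beta[u])^t=\mathcal{T}_\gamma^\beta[u^t]$. Because $G(0)=0$ and $\nabla\mathcal{T}_\gamma^\beta[u]=\nabla u\,\mathbf{1}_{\{\gamma<u<\gamma+\beta\}}$ a.e., the two integrals in the statement reduce to
\begin{equation*}
E(\gamma,\gamma+\beta):=\int_{\{\gamma<u<\gamma+\beta\}}G(|\nabla u|)\,dx,\qquad E^t(\gamma,\gamma+\beta):=\int_{\{\gamma<u^t<\gamma+\beta\}}G(|\nabla u^t|)\,dx.
\end{equation*}
Writing $\Psi(\gamma,\beta):=E^t(\gamma,\gamma+\beta)-E(\gamma,\gamma+\beta)$, the claim becomes $\Psi(\gamma_0,\beta_0)\ge\Psi(\gamma_1,\beta_1)$.

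The core step is to construct a nondecreasing auxiliary function capturing the difference between the two truncation windows and then to apply Proposition \ref{pro0}(10). A direct piecewise inspection shows that $\psi:=\phi_{\gamma_1,\beta_1}-\phi_{\gamma_0,\beta_0}$ is nonnegative with $\psi(0)=0$, and — in the regime $\gamma_1+\beta_1\ge\gamma_0+\beta_0$ — is globally nondecreasing, with $\psi'=\mathbf{1}_{(\gamma_1,\gamma_0)}+\mathbf{1}_{(\gamma_0+\beta_0,\gamma_1+\beta_1)}$ supported on the two "mismatch" slabs lying outside $(\gamma_0,\gamma_0+\beta_0)$. Proposition \ref{pro0}(3) then gives $\psi(u)^t=\psi(u^t)$, so Proposition \ref{pro0}(10) yields
\begin{equation*}
[E^t-E](\gamma_1,\gamma_0)+[E^t-E](\gamma_0+\beta_0,\gamma_1+\beta_1)\le 0.
\end{equation*}
Finally, additivity of $E$ and $E^t$ over disjoint slabs gives the identity
\begin{equation*}
\Psi(\gamma_1,\beta_1)-\Psi(\gamma_0,\beta_0)=[E^t-E](\gamma_1,\gamma_0)+[E^t-E](\gamma_0+\beta_0,\gamma_1+\beta_1),
\end{equation*}
so the combination of the two previous displays proves the lemma in this regime.

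The remaining configuration allowed by the hypothesis, $\gamma_1+\beta_1<\gamma_0+\beta_0$, is the main obstacle: in this case the two slabs $(\gamma_1,\gamma_1+\beta_1)$ and $(\gamma_0,\gamma_0+\beta_0)$ are not nested, and $\psi$ ceases to be monotone (it becomes unimodal), so Proposition \ref{pro0}(3) can no longer be invoked directly. My plan here is to reduce to the previous case via an intermediate truncation: introduce $\beta^\ast:=\gamma_0+\beta_0-\gamma_1\ge\beta_1$ so that $(\gamma_1,\beta^\ast)$ and $(\gamma_0,\beta_0)$ fall into the nested regime of the previous paragraph, and then try to chain $\Psi(\gamma_0,\beta_0)\ge\Psi(\gamma_1,\beta^\ast)$ with a comparison of $\Psi(\gamma_1,\beta^\ast)$ and $\Psi(\gamma_1,\beta_1)$. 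Since the latter has the wrong sign under plain monotonicity in $\beta$, one must instead refine the test function: apply Proposition \ref{pro0}(10) to the nondecreasing sum of two slab functions, $\phi_{\gamma_1,\gamma_0-\gamma_1}+\phi_{\gamma_1+\beta_1,\gamma_0+\beta_0-\gamma_1-\beta_1}$, in order to directly compare the two mismatch deficits $[E^t-E](\gamma_1,\gamma_0)$ and $[E^t-E](\gamma_1+\beta_1,\gamma_0+\beta_0)$. Verifying that this indeed closes the argument (and is consistent with the usage of Lemma \ref{le6} in the proof of Theorem \ref{th3}) is the technical heart of the proof and the part I expect to need the most care.
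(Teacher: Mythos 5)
Your nested-case argument (the first two paragraphs) is correct and is exactly the intended proof: it mirrors Lemma~\ref{le1} with the one-slab function replaced by the two-slab function $\psi=\phi_{\gamma_1,\beta_1}-\phi_{\gamma_0,\beta_0}$, which is bounded, nondecreasing, vanishes at the origin, and is fed into Proposition~\ref{pro0}(3) and \eqref{6-1} just as you write. (The paper omits the proofs in Section~\ref{s4} entirely, so there is nothing to compare verbatim, but this is the evident adaptation.)

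Your treatment of the ``non-nested'' case does not, however, close, and more importantly that case should not arise at all. On the first point: writing $[E^t-E](a,b):=\int_{\{a<u^t<b\}}G(|\nabla u^t|)\,\d x-\int_{\{a<u<b\}}G(|\nabla u|)\,\d x$ and splitting over $(\gamma_1,\gamma_0)$, $(\gamma_0,\gamma_1+\beta_1)$, $(\gamma_1+\beta_1,\gamma_0+\beta_0)$ yields
\begin{equation*}
\Psi(\gamma_1,\beta_1)-\Psi(\gamma_0,\beta_0)=[E^t-E](\gamma_1,\gamma_0)-[E^t-E](\gamma_1+\beta_1,\gamma_0+\beta_0),
\end{equation*}
a \emph{difference} of two slab deficits. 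Your two-slab test function $\phi_{\gamma_1,\gamma_0-\gamma_1}+\phi_{\gamma_1+\beta_1,\gamma_0+\beta_0-\gamma_1-\beta_1}$ only bounds their \emph{sum} above by zero, which says nothing about the difference; I do not see how to repair this, and I do not believe the inequality is true with $\mathcal{T}_\gamma^\beta[u]=\min\{\beta,(u-\gamma)_+\}$ read literally. On the second point: the displayed formula for $\mathcal{T}_\gamma^\beta$ is a typo and should read $\min\{\beta-\gamma,(u-\gamma)_+\}$, i.e.\ the truncation of $u$ between the \emph{levels} $\gamma$ and $\beta$, so that $\nabla\mathcal{T}_\gamma^\beta[u]=\nabla u\,\mathbf{1}_{\{\gamma<u<\beta\}}$ a.e. This reading is forced by the rest of Section~\ref{s4}: the constraint $\beta>\gamma$ in its definition, the level sets $\{\beta>u>\gamma\}$ in Lemmas~\ref{le7} and~\ref{le9}, the hypothesis $\beta\in(\eta-\delta_0,\eta)$ in Lemmas~\ref{le9}--\ref{le10}, and the fact that testing \eqref{eq4} with $\left(\mathcal{T}_{\gamma_1}^{\beta_1}[u]\right)^t-\mathcal{T}_{\gamma_1}^{\beta_1}[u]$ in the proof of Theorem~\ref{th3} is supposed to produce the first bracket on the right of \eqref{4-7}. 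With the corrected definition, $\beta_1\ge\beta_0\ge\gamma_0\ge\gamma_1$ automatically gives nested slabs $(\gamma_0,\beta_0)\subset(\gamma_1,\beta_1)$, $\psi$ has slope one precisely on $(\gamma_1,\gamma_0)\cup(\beta_0,\beta_1)$, and your first argument is the complete proof.
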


\begin{lemma}\label{le7}
  Let $\beta$ and $\gamma$ be two fixed positive constants satisfying $\beta>\gamma$. Then
  \begin{equation}\label{4-7}
  \begin{split}
        \int_{\{\beta>u^t>\gamma\}} &G(|\nabla u^t|)\d x-\int_{\{\beta>u>\gamma\}}G(|\nabla u|)\d x \\
       &  \ge\left(\int_{\{\beta>u^t>\gamma\}}g(|\nabla u|)|\nabla u^t|\d x-\int_{\{\beta>u>\gamma\}}g(|\nabla u|)|\nabla u|\d x\right)\\
       &\ \ \ \ \ \ -\left(\int_{\{\beta>u^t>\gamma\}}h(|\nabla u|)\d x-\int_{\{\beta>u>\gamma\}}h(|\nabla u|)\d x \right).
  \end{split}
  \end{equation}
\end{lemma}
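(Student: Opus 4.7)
The proof follows the same strategy as that of Lemma~\ref{le2}, with the one-sided super-level set $\{u>\gamma\}$ replaced by the two-sided ``slice'' $\{\beta>u>\gamma\}$. The convexity of $G$ is what drives everything, and it applies pointwise in exactly the same way.

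The plan is as follows. First, I would invoke the convexity inequality $G(b)-G(a)\ge g(a)(b-a)$, valid for all $a,b\in[0,+\infty)$ since $g=G'$ is nondecreasing. Applying this with $a=|\nabla u(x)|$ and $b=|\nabla u^t(x)|$ gives the pointwise bound
\begin{equation*}
  G(|\nabla u^t|)\ge G(|\nabla u|)+g(|\nabla u|)\bigl(|\nabla u^t|-|\nabla u|\bigr),
\end{equation*}
which, after integrating over the set $\{\beta>u^t>\gamma\}$, yields
\begin{equation*}
  \int_{\{\beta>u^t>\gamma\}}\!\!G(|\nabla u^t|)\,\d x
  \ge \int_{\{\beta>u^t>\gamma\}}\!\!g(|\nabla u|)|\nabla u^t|\,\d x
  -\int_{\{\beta>u^t>\gamma\}}\!\!\bigl[g(|\nabla u|)|\nabla u|-G(|\nabla u|)\bigr]\,\d x.
\end{equation*}

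Next, I would use the definition $h(z)=zg(z)-G(z)$ to rewrite the last integrand as $h(|\nabla u|)$, and then add and subtract the corresponding integral over $\{\beta>u>\gamma\}$, together with the identity
\begin{equation*}
  \int_{\{\beta>u>\gamma\}}\!\!g(|\nabla u|)|\nabla u|\,\d x
  =\int_{\{\beta>u>\gamma\}}\!\!h(|\nabla u|)\,\d x+\int_{\{\beta>u>\gamma\}}\!\!G(|\nabla u|)\,\d x.
\end{equation*}
Subtracting $\int_{\{\beta>u>\gamma\}}G(|\nabla u|)\,\d x$ from both sides and regrouping the $h$-terms and $g$-terms produces exactly the inequality \eqref{4-7}.

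There is no genuine obstacle here; the argument is purely algebraic and relies only on the convexity of $G$ and the relation $h=zg-G$. The two-sided nature of the slice $\{\beta>u^t>\gamma\}$ versus $\{\beta>u>\gamma\}$ plays no role in the pointwise convexity step, and the bookkeeping is identical to that in Lemma~\ref{le2}. Consequently, I expect the proof to be a short, essentially verbatim adaptation of the earlier one.
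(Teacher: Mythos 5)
Your proposal is correct and matches the paper's intended argument exactly: the paper omits the proof of Lemma~\ref{le7}, stating that it follows the proof of Lemma~\ref{le2}, and your adaptation — the pointwise convexity inequality $G(b)-G(a)\ge g(a)(b-a)$ integrated over $\{\beta>u^t>\gamma\}$, followed by the identity $zg(z)=h(z)+G(z)$ and regrouping — is precisely that same argument with the one-sided super-level sets replaced by two-sided slices.
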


\begin{lemma}\label{le8}
  There exist constants $C_0>0$ and $\delta_0>0$ such that
  \begin{equation*}
    \int_{\{u=\gamma\}}g\left(|\nabla u| \right)\d \mathcal{H}_{N-1}(x)\le C_0
  \end{equation*}
  for all positive constant $\gamma$ satisfying $\gamma<\delta_0$ or $0<\eta-\gamma<\delta_0$.
\end{lemma}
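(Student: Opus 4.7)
The plan is to adapt the proof of Lemma \ref{le3} to the ring-shaped setting, with the twist that the super-level set $\{u>\gamma\}$ now contains $\overline{\Omega_1}$, whose boundary may be rough. To circumvent this I will interpose two fixed intermediate ``reference'' level sets so that the integration by parts never has to see $\partial\Omega_0$ or $\partial\Omega_1$.

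The first step is to extract the safe zones where the level sets of $u$ are smooth. From \eqref{eq14}, choosing $\varepsilon<|\mathbf{c}_0|$ in the nondegenerate subcase (or any $\varepsilon>0$ in the degenerate subcase $\mathbf{c}_0=0$) yields an open neighborhood $\mathcal{V}_0\supset\partial\Omega_0$ on which $|\nabla u|>0$; combining this with continuity of $u$, I obtain $\delta^{(0)}>0$ such that $\{0<u\le\delta^{(0)}\}\subset \mathcal{V}_0\cap\Omega$ and $|\nabla u|>0$ on this set. An analogous argument at the inner boundary, using $u<\eta$ in $\Omega$ and the corresponding part of \eqref{eq14}, produces $\delta^{(1)}>0$ with $|\nabla u|>0$ on $\{\eta-\delta^{(1)}\le u<\eta\}$. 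I then fix once and for all $\gamma'\in(0,\delta^{(0)})$ and $\gamma''\in(\eta-\delta^{(1)},\eta)$; by the implicit function theorem, $\{u=\gamma'\}$ and $\{u=\gamma''\}$ are compact $C^1$ hypersurfaces and, since $u$ is Lipschitz on $\R^N$, the two quantities $\int_{\{u=\gamma'\}}g(|\nabla u|)\,\d\mathcal{H}_{N-1}(x)$ and $\int_{\{u=\gamma''\}}g(|\nabla u|)\,\d\mathcal{H}_{N-1}(x)$ are finite universal constants.

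With these ingredients in place, the rest of the argument replays Lemma \ref{le3} on annular level-set strips. The key geometric observation is that for $0<\gamma<\gamma'$ one has $\overline{\Omega_1}\subset\{u>\gamma\}$ and $\R^N\setminus\Omega_0\subset\{u<\gamma'\}$, so that $E_\gamma:=\{\gamma<u<\gamma'\}$ sits entirely inside the smooth strip $\{0<u\le\delta^{(0)}\}$ and has topological boundary consisting solely of the two disjoint $C^1$ hypersurfaces $\{u=\gamma\}\cup\{u=\gamma'\}$. Applying Green's theorem (exactly as in the references \cite{EvansMR3409135, TemamMR1846644} invoked in Lemma \ref{le3}) to the equation \eqref{eq3} on $E_\gamma$, with outward unit normals $-\nabla u/|\nabla u|$ on $\{u=\gamma\}$ and $+\nabla u/|\nabla u|$ on $\{u=\gamma'\}$, produces
\begin{equation*}
\int_{\{u=\gamma\}}g(|\nabla u|)\,\d\mathcal{H}_{N-1}(x)=\int_{\{u=\gamma'\}}g(|\nabla u|)\,\d\mathcal{H}_{N-1}(x)+\int_{E_\gamma}f(u)\,\d x,
\end{equation*}
whose right-hand side is bounded by a constant $C^{(0)}$ independent of $\gamma$ since $f(u)$ is bounded on $\Omega$. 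A symmetric computation for $\gamma''<\gamma<\eta$ on $E_\gamma:=\{\gamma''<u<\gamma\}$ gives
\begin{equation*}
\int_{\{u=\gamma\}}g(|\nabla u|)\,\d\mathcal{H}_{N-1}(x)=\int_{\{u=\gamma''\}}g(|\nabla u|)\,\d\mathcal{H}_{N-1}(x)-\int_{E_\gamma}f(u)\,\d x\le C^{(1)},
\end{equation*}
and setting $\delta_0:=\min\{\gamma',\eta-\gamma''\}$ and $C_0:=\max\{C^{(0)},C^{(1)}\}$ finishes the proof.

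The main obstacle relative to Lemma \ref{le3} is the geometric step just described: one must carefully verify that the reference levels $\gamma',\gamma''$ truly insulate $E_\gamma$ from the potentially non-smooth sets $\partial\Omega_0$ and $\partial\Omega_1$, so that no unwanted boundary contributions appear. Once this is in hand, the analytic content is essentially a direct copy of the previous section's proof.
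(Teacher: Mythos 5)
Your proof is correct and takes the approach the paper clearly intends (the paper omits the proof, indicating only that it follows Lemma \ref{le3} closely). The key modification you make — integrating the equation over the level-set strips $\{\gamma<u<\gamma'\}$ and $\{\gamma''<u<\gamma\}$, with fixed reference levels $\gamma'$ and $\gamma''$ chosen inside the neighborhoods of $\partial\Omega_0$ and $\partial\Omega_1$ where $|\nabla u|>0$, rather than over the full super-level set $\{u>\gamma\}$ — is exactly the adaptation needed, since $\{u>\gamma\}$ now contains $\overline{\Omega_1}$ where the equation does not hold and whose boundary may be rough.
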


\begin{lemma}\label{le9}
  Let $\varepsilon > 0$ be given. Then there exists a small constant $\delta_0 > 0$ such that for any $\beta\in (\eta-\delta_0, \eta)$ and $\gamma \in (0, \delta_0)$, the following holds:
  \begin{equation*}
     \int_{\{\beta>u^t>\gamma\}}h(|\nabla u|)\d x-\int_{\{\beta>u>\gamma\}}h(|\nabla u|)\d x  \le \varepsilon t
  \end{equation*}
 for all sufficiently small $t\ge 0$.
\end{lemma}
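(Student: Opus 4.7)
The strategy is to reduce the estimate to two applications of the argument used in the proof of Lemma \ref{le4}, one handling the behavior near the level set $\{u=\gamma\}$ (i.e., near $\partial\Omega_0$) and the other near the level set $\{u=\beta\}$ (i.e., near $\partial\Omega_1$). Writing $\{\beta>u^t>\gamma\}=\{u^t>\gamma\}\setminus\{u^t\ge\beta\}$ and $\{\beta>u>\gamma\}=\{u>\gamma\}\setminus\{u\ge\beta\}$, the quantity to be estimated decomposes as $A(t,\gamma)-B(t,\beta)$, where
\begin{equation*}
A(t,\gamma):=\int_{\{u^t>\gamma\}}h(|\nabla u|)\,\d x-\int_{\{u>\gamma\}}h(|\nabla u|)\,\d x,
\end{equation*}
\begin{equation*}
B(t,\beta):=\int_{\{u^t\ge\beta\}}h(|\nabla u|)\,\d x-\int_{\{u\ge\beta\}}h(|\nabla u|)\,\d x.
\end{equation*}
It will therefore suffice to show that $|A(t,\gamma)|\le\varepsilon t/2$ and $|B(t,\beta)|\le\varepsilon t/2$ for all sufficiently small $t\ge 0$, provided $\gamma$ is small enough and $\beta$ is close enough to $\eta$.

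The bound on $A(t,\gamma)$ is obtained exactly as in Lemma \ref{le4}, with $\mathbf{c}_0$ playing the role of $\mathbf{c}$ and using the portion of \eqref{eq14} that controls $|\nabla u|$ near $\partial\Omega_0$. The bound on $B(t,\beta)$ follows the same blueprint, now with $|\mathbf{c}_1|$ in place of $\mathbf{c}$: equimeasurability $\mathcal{L}^N(\{u^t\ge\beta\})=\mathcal{L}^N(\{u\ge\beta\})$ allows me to replace $h(|\nabla u|)$ by $h(|\nabla u|)-h(|\mathbf{c}_1|)$ in both integrals without changing $B(t,\beta)$; the Lipschitz continuity of $u$ on $\R^N$ (with constant $L$, say) together with the geometric control of CStS yields
\begin{equation*}
\{u^t\ge\beta\}\triangle\{u\ge\beta\}\subset\{x\in\R^N:\operatorname{dist}(x,\{u=\beta\})<C_1 t\}\subset\{\beta-C_1Lt\le u\le \beta+C_1Lt\}
\end{equation*}
for some constant $C_1$ depending only on $u$. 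For $\beta$ close enough to $\eta$, this strip is contained in $\mathcal{V}_1\cap\Omega$, so \eqref{eq14} gives pointwise control on $|\nabla u|$ there. Combining the resulting pointwise smallness of $|h(|\nabla u|)-h(|\mathbf{c}_1|)|$ with the co-area formula and the uniform bound $\int_{\{u=s\}}g(|\nabla u|)\,\d\mathcal{H}_{N-1}\le C_0$ from Lemma \ref{le8} yields $|B(t,\beta)|\le\varepsilon t/2$ by the same computation as in Lemma \ref{le4}.

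The main subtlety is that each boundary admits either a nondegenerate regime ($\mathbf{c}_i\ne 0$) or a degenerate regime ($\mathbf{c}_i=0$), so the argument splits into (at most) four sub-cases. When $\mathbf{c}_i\ne 0$, one uses a pointwise lower bound $|\nabla u|\ge|\mathbf{c}_i|/2$ on the relevant strip to replace $1/|\nabla u|$ in the co-area integrand by a bounded multiple of $g(|\nabla u|)$, as in Case 1 of the proof of Lemma \ref{le4}. When $\mathbf{c}_i=0$, one uses instead the pointwise upper bound $|\nabla u|<\varepsilon$ from \eqref{eq14} together with $h(|\nabla u|)\le|\nabla u|g(|\nabla u|)$ to bound the integrand directly, as in Case 2 of the proof of Lemma \ref{le4}. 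Both cases transfer without modification to the strip near $\{u=\beta\}$, since the boundary data on $\partial\Omega_1$ in \eqref{eq14} are structurally identical to those on $\partial\Omega_0$. Hence no new ingredient is required beyond the two-boundary decomposition above; the proof consists of applying the Lemma \ref{le4} argument twice and summing the two resulting estimates.
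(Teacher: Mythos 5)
Your proof is correct and matches the route the paper intends: the paper states only that Lemma \ref{le9} ``closely follows'' Lemma \ref{le4}, and your two-boundary decomposition into $A(t,\gamma)-B(t,\beta)$, each bounded by running the Lemma \ref{le4} argument (equimeasurability, the $C_1 t$-neighborhood estimate for the symmetric difference, co-area together with Lemma \ref{le8}, and the nondegenerate/degenerate case split from \eqref{eq14}), is exactly the natural way to carry that out. The only point worth noting is that you correctly observe $\mathcal{L}^N(\{u^t\ge\beta\})=\mathcal{L}^N(\{u\ge\beta\})$ and that the level sets $\{u=\beta\}$, $\{u^t=\beta\}$ are null, so the choice of $\ge$ versus $>$ in the decomposition is immaterial.
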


\begin{lemma}\label{le10}
 Let $f$ be a function satisfying assumption $(\mathcal{A})$. Let $\varepsilon > 0$ be given. Then there exists a small constant $\delta_0 > 0$ such that for any $\beta\in (\eta-\delta_0, \eta)$ and $\gamma \in (0, \delta_0)$, the following holds:
 \begin{equation*}
   \int_{\Omega}f(u)\left[\left(\mathcal{T}_{\gamma}^{\beta}[u]\right)^t-\mathcal{T}_{\gamma}^{\beta}[u]\right]\d x\ge -\varepsilon t+o(t)
 \end{equation*}
for all $t\ge 0$.
\end{lemma}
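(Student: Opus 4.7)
The plan is to adapt the proof of Lemma \ref{le5} to the two-sided truncation $w := \mathcal{T}_\gamma^\beta[u]$. Write $\tilde{f}(s) := f(s + \gamma)$ and decompose
\[
\int_\Omega f(u)(w^t - w)\,\d x = I_1(t) + I_2(t),
\]
where $I_1 := \int_\Omega [f(u) - \tilde{f}(w)](w^t - w)\,\d x$ and $I_2 := \int_\Omega \tilde{f}(w)(w^t - w)\,\d x$. The new ingredient compared with Lemma \ref{le5} is a second boundary layer at $\partial\Omega_1$, coming from the upper cap at $\beta$.

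For $I_1$: the integrand vanishes on $\{\gamma < u < \beta + \gamma\}$ (where $w = u - \gamma$ forces $\tilde{f}(w) = f(u)$), so it is supported on $(\{0 < u \le \gamma\} \cup \{u \ge \beta + \gamma\}) \cap \Omega$. Since $0 < u < \eta$ on $\Omega$ and $u$ is continuous, both measures tend to $0$ as $\delta_0 \to 0^+$. Combined with $|w^t - w| \le C_0 t$ from Proposition \ref{pro0}(10) and $|f|$ bounded on $[0,\eta]$, this yields $|I_1(t)| \le (\varepsilon/2) t$ for $\delta_0$ sufficiently small, exactly as in Step 1 of Lemma \ref{le5}.

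For $I_2$: apply Jordan's decomposition $\tilde{f} = \tilde{f}_1 + \tilde{f}_2 + \tilde{f}_3$ (continuous, bounded nondecreasing with $\tilde{f}_2(0) = 0$, bounded nonincreasing), and Cavalieri's principle (Proposition \ref{pro0}(5)) to each antiderivative $\tilde{F}_i(w) := \int_0^\cdot \tilde{f}_i$ on $\R^N$. Splitting the identity $\int_{\R^N}[\tilde{F}_i(w^t) - \tilde{F}_i(w)]\,\d x = 0$ according to $\R^N = \Omega \sqcup \overline{\Omega_1} \sqcup (\R^N \setminus \Omega_0)$ produces two correction integrals relative to the Lemma \ref{le5} argument. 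The $\R^N \setminus \Omega_0$ correction is $o(t)$ because $w = 0$ there, $w^t \le LRt$ by Proposition \ref{pro0}(10), and $\mathcal{L}^N(\{w^t > 0\} \setminus \{w > 0\}) \to 0$ by Proposition \ref{pro0}(6) together with equimeasurability. The $\overline{\Omega_1}$ correction is handled by the key geometric observation: enforcing $\beta + \gamma < \eta$ via the choice of $\delta_0$, the plateau set $\{w = c_1\}$ (with $c_1 := \beta$) coincides with $\{u \ge \beta + \gamma\}$ and strictly contains $\overline{\Omega_1}$ with positive margin (since $u = \eta > \beta + \gamma$ on $\overline{\Omega_1}$ after the extension). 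By the interval property of continuous Steiner symmetrization (Definition \ref{csts}(iv)), each $x_1$-slice of the super-level sets $\{w > c\}$ with $c < c_1$ shifts by at most an amount of order $t$, so for $t$ small enough the symmetrized slice still contains the corresponding slice of $\overline{\Omega_1}$; this forces $w^t \equiv c_1 \equiv w$ on $\overline{\Omega_1}$, making the $\overline{\Omega_1}$ correction vanish identically. The remainder -- uniform continuity for $\tilde{f}_1$ together with monotonicity and $L^1$-continuity for $\tilde{f}_2, \tilde{f}_3$ -- proceeds exactly as in Lemma \ref{le5}, yielding $I_2(t) \ge o(t)$.

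Combining the estimates for $I_1$ and $I_2$ gives the stated inequality. The main conceptual obstacle is the new correction over $\overline{\Omega_1}$: its resolution is the geometric fact that $\overline{\Omega_1}$ lies strictly inside the plateau $\{w = c_1\}$ (under the restriction $\beta + \gamma < \eta$), so that the continuous Steiner symmetrization leaves $w$ invariant on $\overline{\Omega_1}$ for sufficiently small $t$. The residual regime $\beta + \gamma \ge \eta$ reduces to the Lemma \ref{le5} framework, since there the cap at $\beta$ is inactive on $\Omega$ and $w = (u - \gamma)_+$ throughout $\Omega$.
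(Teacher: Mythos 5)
The paper gives no proof of Lemma \ref{le10}, only stating that the details ``follow from straightforward modifications'' of Lemma \ref{le5}, so your proposal must be assessed on its own. Your overall scheme --- decompose into $I_1,I_2$, notice the two boundary layers, and argue that the $\overline{\Omega_1}$ correction vanishes because $w:=\mathcal{T}_\gamma^\beta[u]$ has a plateau at level $\beta$ strictly containing $\overline{\Omega_1}$ when $\beta+\gamma<\eta$ --- is the right idea, and once $w^t\equiv w$ on $\overline{\Omega_1}$ and $\R^N\setminus\Omega_0$ for small $t$, the Cavalieri identity again reads $\int_\Omega[\tilde F_i(w^t)-\tilde F_i(w)]\,\d x=0$ and Lemma \ref{le5}'s argument applies verbatim.

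However, two justifications are not adequate as written. First, the claim that $w^t\equiv\beta$ on $\overline{\Omega_1}$ for small $t$ is derived from ``the interval property'' of the slice-wise symmetrization, but the $x_1$-slices of $\{w>c\}$ need not be intervals, so the interval property alone does not bound how the slices move (disconnected slices can behave differently). The correct route is to fix $W:=\{u>\beta+\gamma\}$, note $\{w>c\}\supset W\supset\overline{\Omega_1}$ with positive margin $d:=\operatorname{dist}(\overline{\Omega_1},\partial W)>0$ for \emph{every} $c<\beta$, take a Lipschitz bump $\psi$ with $\psi\equiv1$ on $\overline{\Omega_1}$, $\operatorname{supp}\psi\subset W$, so that $\psi\le\chi_W$; then Proposition~\ref{pro0}(2) gives $\psi^t\le\chi_{\T_t(W)}\le\chi_{\T_t(\{w>c\})}$, and Proposition~\ref{pro0}(10) gives $\psi^t>0$ on $\overline{\Omega_1}$ for $t$ small (with a threshold depending only on $d$, not on $c$). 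This yields $w^t(x)\ge\sup\{c<\beta\}=\beta$ on $\overline{\Omega_1}$, hence $w^t\equiv\beta$ there. Second, the assertion that ``the residual regime $\beta+\gamma\ge\eta$ reduces to the Lemma~\ref{le5} framework'' is incorrect: in that regime $w=(u-\gamma)_+$ equals $\eta-\gamma\ne0$ on $\overline{\Omega_1}$, so $\operatorname{supp} w$ still contains $\overline{\Omega_1}$ and the Cavalieri identity does \emph{not} localize to $\Omega$ as in Lemma~\ref{le5}; moreover the plateau $\{w=\eta-\gamma\}$ coincides with $\overline{\Omega_1}$ with \emph{no} margin, so $w^t\equiv w$ on $\overline{\Omega_1}$ fails in general (the threshold in the bump-function argument degenerates as $c\to(\eta-\gamma)^-$). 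The lemma is still true there --- one can show $\mathcal{L}^N(\{x\in\overline{\Omega_1}:w^t(x)<\eta-\gamma\})\to0$ by exhausting $\Omega_1$ with compacta and repeating the bump argument, combined with $|w^t-w|\le Ct$ --- but this requires a separate estimate that your sketch does not supply. (In fairness, the application in the proof of Theorem~\ref{th3} uses only $\beta_1+\gamma_1<\eta$, where the test function must vanish near $\partial\Omega_1$, so the residual regime is not actually needed; but as stated the lemma includes it, and your reduction does not cover it.)
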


With the above technical lemmas established, we are now ready to prove Theorem \ref{th3}.

\smallskip

\noindent \textbf{Proof of Theorem \ref{th3}}:

The proof closely parallels that of the previous section. Therefore, we provide only a sketch, omitting the details. As explained at the beginning of this section, it suffices to prove that $u$ is locally symmetric. This can be achieved if we can show that $\mathcal{T}_{\gamma_0}^{\beta_0}[u]$ is locally symmetric for any $\beta_0>\gamma_0>0$. It suffices to show that $\mathcal{T}_{\gamma_0}^{\beta_0}[u]$ is locally symmetric in the direction $x_1$. The local symmetry in other directions can be established in a similar manner. The key idea is to apply Proposition \ref{pro2}. This can be accomplished through the following steps. First, thanks to Lemma \ref{le6}, we reduce the problem to finding appropriate values of $\beta_1$ and $\gamma_1$. Next, by Lemma \ref{le7}, the task is transformed into estimating the two terms on the right-hand side of \eqref{4-7}. Then, testing equation \eqref{eq3} with $\left(\left(\mathcal{T}_{\gamma_1}^{\beta_1}[u]\right)^t-\mathcal{T}_{\gamma_1}^{\beta_1}[u]\right)$ and using Lemma \ref{le10}, we derive the desired estimate for the first term on the right-hand side of \eqref{4-7}. Finally, Lemma \ref{le9} provides the desired estimate for the second term on the right-hand side of \eqref{4-7}. These steps complete the proof.

\begin{remark}\label{remark-4}
  If $\Omega_i$ (for $i = 1$ or $2$) is known a priori to be a ball, then the Neumann boundary condition for $u$ on $\partial\Omega_i$ becomes redundant and may be omitted. This fact can be obtained by a slight modification of the above proof. In fact, the truncation argument near the boundary is primarily employed to ensure the legitimacy of the test function. If $\Omega_i$ is already known a priori to be a ball, truncation near its boundary is unnecessary. For instance, suppose that $\Omega_1$ is a ball, say $B_{r_1}$ (after a suitable translation, if necessary). Then, for all $\gamma > 0$ and $t \ge 0$, we have $[(u - \gamma)_+^t - (u - \gamma)_+] \equiv 0$ in $\overline{\Omega_1}$. It follows that $[(u - \gamma)_+^t - (u - \gamma)_+] \in H^1_0(\Omega)$ provided that $t \ge 0$ is sufficiently small. The problem reduces to showing that $(u - \gamma)_+$ is locally symmetric for any $\gamma > 0$. This requires only replacing the test function $\left(\left(\mathcal{T}_{\gamma_1}^{\beta_1}[u]\right)^t-\mathcal{T}_{\gamma_1}^{\beta_1}[u]\right)$ in the preceding argument with $[(u - \gamma)_+^t - (u - \gamma)_+]$, and then carrying out a similar discussion.

\end{remark}

\section*{Acknowledgements}

The authors are grateful to Professor Reichel for kindly sharing his paper \cite{ReichelMR1416582}, and to Y. R.-Y. Zhang for helpful discussions and comments. The research of D. Cao is supported by National Key R \& D Program (2023YFA1010001) and NNSF of China (grant No. 12371212). The research of J. Wei is partially supported by GRF from RGC of Hong Kong entitled ``New frontiers in singularity formations in nonlinear partial differential equations". The research of W. Zhan is supported by NNSF of China (grant No. 12201525).

\bigskip
\noindent \textbf{Data Availability}: Data sharing is not applicable to this article as no datasets were generated or analyzed during the current study.

\smallskip
\noindent \textbf{Conflict of interest}: The authors declare that they have no conflict of interest.

\bibliography{ref}

\begin{thebibliography}{10}

\bibitem{Agostiniani2024}
{\sc V.~Agostiniani, S.~Borghini, and L.~Mazzieri}, {\em On the {S}errin
  problem for ring-shaped domains}, to appear in J. Eur. Math. Soc. (JEMS).

\bibitem{AlexandrovMR102114}
{\sc A.~D. Aleksandrov}, {\em Uniqueness theorems for surfaces in the large.
  {V}}, Vestnik Leningrad. Univ., 13 (1958), pp.~5--8.

\bibitem{AlexandrovMR143162}
{\sc A.~D. Alexandrov}, {\em A characteristic property of spheres}, Ann. Mat.
  Pura Appl. (4), 58 (1962), pp.~303--315.

\bibitem{BorghiniMR4431667}
{\sc S.~Borghini}, {\em Symmetry results for {S}errin-type problems in doubly
  connected domains}, Math. Eng., 5 (2023), pp.~Paper No. 027, 16.

\bibitem{BrandoliniMR2436453}
{\sc B.~Brandolini, C.~Nitsch, P.~Salani, and C.~Trombetti}, {\em On the
  stability of the {S}errin problem}, J. Differential Equations, 245 (2008),
  pp.~1566--1583.

\bibitem{BrandoliniMR2448319}
\leavevmode\vrule height 2pt depth -1.6pt width 23pt, {\em Serrin-type
  overdetermined problems: an alternative proof}, Arch. Ration. Mech. Anal.,
  190 (2008), pp.~267--280.

\bibitem{Bro1995MR1330619}
{\sc F.~Brock}, {\em Continuous {S}teiner-symmetrization}, Math. Nachr., 172
  (1995), pp.~25--48.

\bibitem{Broc2000MR1758811}
\leavevmode\vrule height 2pt depth -1.6pt width 23pt, {\em Continuous
  rearrangement and symmetry of solutions of elliptic problems}, Proc. Indian
  Acad. Sci. Math. Sci., 110 (2000), pp.~157--204.

\bibitem{Bro2007MR2569330}
\leavevmode\vrule height 2pt depth -1.6pt width 23pt, {\em Rearrangements and
  applications to symmetry problems in {PDE}}, in Handbook of differential
  equations: stationary partial differential equations. {V}ol. {IV}, Handb.
  Differ. Equ., Elsevier/North-Holland, Amsterdam, 2007, pp.~1--60.

\bibitem{Bro2016MR3509374}
\leavevmode\vrule height 2pt depth -1.6pt width 23pt, {\em Symmetry for a
  general class of overdetermined elliptic problems}, NoDEA Nonlinear
  Differential Equations Appl., 23 (2016), pp.~Art. 36, 16.

\bibitem{BrockMR1947461}
{\sc F.~Brock and A.~Henrot}, {\em A symmetry result for an overdetermined
  elliptic problem using continuous rearrangement and domain derivative}, Rend.
  Circ. Mat. Palermo (2), 51 (2002), pp.~375--390.

\bibitem{ButtazzoMR2764863}
{\sc G.~Buttazzo and B.~Kawohl}, {\em Overdetermined boundary value problems
  for the {$\infty$}-{L}aplacian}, Int. Math. Res. Not. IMRN,  (2011),
  pp.~237--247.

\bibitem{CavallinaMR4340795}
{\sc L.~Cavallina}, {\em The simultaneous asymmetric perturbation method for
  overdetermined free boundary problems}, Nonlinear Anal., 215 (2022),
  pp.~Paper No. 112685, 17.

\bibitem{ChoulliMR1626395}
{\sc M.~Choulli and A.~Henrot}, {\em Use of the domain derivative to prove
  symmetry results in partial differential equations}, Math. Nachr., 192
  (1998), pp.~91--103.

\bibitem{CianchiMR2545870}
{\sc A.~Cianchi and P.~Salani}, {\em Overdetermined anisotropic elliptic
  problems}, Math. Ann., 345 (2009), pp.~859--881.

\bibitem{CiraoloMR3959271}
{\sc G.~Ciraolo and L.~Vezzoni}, {\em On {S}errin's overdetermined problem in
  space forms}, Manuscripta Math., 159 (2019), pp.~445--452.

\bibitem{CortMR1387457}
{\sc C.~Cort\'azar, M.~Elgueta, and P.~Felmer}, {\em Symmetry in an elliptic
  problem and the blow-up set of a quasilinear heat equation}, Comm. Partial
  Differential Equations, 21 (1996), pp.~507--520.

\bibitem{DiBenedettoMR709038}
{\sc E.~DiBenedetto}, {\em {$C\sp{1+\alpha }$}\ local regularity of weak
  solutions of degenerate elliptic equations}, Nonlinear Anal., 7 (1983),
  pp.~827--850.

\bibitem{Drivas2024}
{\sc T.~D. Drivas and M.~Nualart}, {\em A geometric characterization of steady
  laminar flow}, arXiv:2410.18946.

\bibitem{Enci2023}
{\sc A.~Enciso, A.~J. Fernández, D.~Ruiz, and P.~Sicbaldi}, {\em A
  schiffer-type problem for annuli with applications to stationary planar
  {E}uler flows}, to appear in Duke Math. J.

\bibitem{EvansMR3409135}
{\sc L.~C. Evans and R.~F. Gariepy}, {\em Measure theory and fine properties of
  functions}, Textbooks in Mathematics, CRC Press, Boca Raton, FL, revised~ed.,
  2015.

\bibitem{FarinaMR2366129}
{\sc A.~Farina and B.~Kawohl}, {\em Remarks on an overdetermined boundary value
  problem}, Calc. Var. Partial Differential Equations, 31 (2008), pp.~351--357.

\bibitem{FarinaMR3145008}
{\sc A.~Farina and E.~Valdinoci}, {\em On partially and globally overdetermined
  problems of elliptic type}, Amer. J. Math., 135 (2013), pp.~1699--1726.

\bibitem{Figalli2024}
{\sc A.~Figalli and R.-Y.~Y. Zhang}, {\em Serrin's overdetermined problem in
  rough domains}, arXiv:2407.02293.

\bibitem{FolMR3086464}
{\sc J.~F\"oldes}, {\em On {S}errin's symmetry result in nonsmooth domains and
  its applications}, Adv. Differential Equations, 18 (2013), pp.~523--548.

\bibitem{FragaMR2863764}
{\sc I.~Fragal\`a}, {\em Symmetry results for overdetermined problems on convex
  domains via {B}runn-{M}inkowski inequalities}, J. Math. Pures Appl. (9), 97
  (2012), pp.~55--65.

\bibitem{FraMR2232009}
{\sc I.~Fragal\`a, F.~Gazzola, and B.~Kawohl}, {\em Overdetermined problems
  with possibly degenerate ellipticity, a geometric approach}, Math. Z., 254
  (2006), pp.~117--132.

\bibitem{GarofaloMR980297}
{\sc N.~Garofalo and J.~L. Lewis}, {\em A symmetry result related to some
  overdetermined boundary value problems}, Amer. J. Math., 111 (1989),
  pp.~9--33.

\bibitem{GazzolaMR2257026}
{\sc F.~Gazzola}, {\em No geometric approach for general overdetermined
  elliptic problems with nonconstant source}, Matematiche (Catania), 60 (2005),
  pp.~259--268.

\bibitem{KamburovMR4200475}
{\sc N.~Kamburov and L.~Sciaraffia}, {\em Nontrivial solutions to {S}errin's
  problem in annular domains}, Ann. Inst. H. Poincar\'e{} C Anal. Non
  Lin\'eaire, 38 (2021), pp.~1--22.

\bibitem{Kaw1985MR810619}
{\sc B.~Kawohl}, {\em Rearrangements and convexity of level sets in {PDE}},
  vol.~1150 of Lecture Notes in Mathematics, Springer-Verlag, Berlin, 1985.

\bibitem{KawohlMR4205793}
{\sc B.~Kawohl and M.~Lucia}, {\em Some results related to {S}chiffer's
  problem}, J. Anal. Math., 142 (2020), pp.~667--696.

\bibitem{KhavinsonMR2174103}
{\sc D.~Khavinson, A.~Y. Solynin, and D.~Vassilev}, {\em Overdetermined
  boundary value problems, quadrature domains and applications}, Comput.
  Methods Funct. Theory, 5 (2005), pp.~19--48.

\bibitem{KumaresanMR1487977}
{\sc S.~Kumaresan and J.~Prajapat}, {\em Serrin's result for hyperbolic space
  and sphere}, Duke Math. J., 91 (1998), pp.~17--28.

\bibitem{LeeMR4566201}
{\sc J.~Lee and K.~Seo}, {\em Overdetermined problems in annular domains with a
  spherical-boundary component in space forms}, J. Inequal. Appl.,  (2023),
  pp.~Paper No. 45, 22.

\bibitem{Lian2025}
{\sc Y.~Lian and P.~Sicbaldi}, {\em Rigidity results for the capillary
  overdetermined problem}, arXiv:2503.14215.

\bibitem{Lie2001MR1817225}
{\sc E.~H. Lieb and M.~Loss}, {\em Analysis}, vol.~14 of Graduate Studies in
  Mathematics, American Mathematical Society, Providence, RI, second~ed., 2001.

\bibitem{MagnaniniMR4041100}
{\sc R.~Magnanini and G.~Poggesi}, {\em On the stability for {A}lexandrov's
  soap bubble theorem}, J. Anal. Math., 139 (2019), pp.~179--205.

\bibitem{NitschMR3802818}
{\sc C.~Nitsch and C.~Trombetti}, {\em The classical overdetermined {S}errin
  problem}, Complex Var. Elliptic Equ., 63 (2018), pp.~1107--1122.

\bibitem{PayneMR1021402}
{\sc L.~E. Payne and P.~W. Schaefer}, {\em Duality theorems in some
  overdetermined boundary value problems}, Math. Methods Appl. Sci., 11 (1989),
  pp.~805--819.

\bibitem{PrajapatMR1487978}
{\sc J.~Prajapat}, {\em Serrin's result for domains with a corner or cusp},
  Duke Math. J., 91 (1998), pp.~29--31.

\bibitem{PucciMR2356201}
{\sc P.~Pucci and J.~Serrin}, {\em The maximum principle}, vol.~73 of Progress
  in Nonlinear Differential Equations and their Applications, Birkh\"auser
  Verlag, Basel, 2007.

\bibitem{ReichelMR1416582}
{\sc W.~Reichel}, {\em Radial symmetry by moving planes for semilinear elliptic
  {BVP}s on annuli and other non-convex domains}, in Elliptic and parabolic
  problems ({P}ont-\`a-{M}ousson, 1994), vol.~325 of Pitman Res. Notes Math.
  Ser., Longman Sci. Tech., Harlow, 1995, pp.~164--182.

\bibitem{ReichelMR1463801}
{\sc W.~Reichel}, {\em Radial symmetry for elliptic boundary-value problems on
  exterior domains}, Arch. Rational Mech. Anal., 137 (1997), pp.~381--394.

\bibitem{RosMR3666566}
{\sc A.~Ros, D.~Ruiz, and P.~Sicbaldi}, {\em A rigidity result for
  overdetermined elliptic problems in the plane}, Comm. Pure Appl. Math., 70
  (2017), pp.~1223--1252.

\bibitem{RosMR4046014}
\leavevmode\vrule height 2pt depth -1.6pt width 23pt, {\em Solutions to
  overdetermined elliptic problems in nontrivial exterior domains}, J. Eur.
  Math. Soc. (JEMS), 22 (2020), pp.~253--281.

\bibitem{RosMR3062759}
{\sc A.~Ros and P.~Sicbaldi}, {\em Geometry and topology of some overdetermined
  elliptic problems}, J. Differential Equations, 255 (2013), pp.~951--977.

\bibitem{RuizMR4575796}
{\sc D.~Ruiz}, {\em Symmetry results for compactly supported steady solutions
  of the 2{D} {E}uler equations}, Arch. Ration. Mech. Anal., 247 (2023),
  pp.~Paper No. 40, 25.

\bibitem{SchaeferMR1971630}
{\sc P.~W. Schaefer}, {\em On nonstandard overdetermined boundary value
  problems}, in Proceedings of the {T}hird {W}orld {C}ongress of {N}onlinear
  {A}nalysts, {P}art 4 ({C}atania, 2000), vol.~47, 2001, pp.~2203--2212.

\bibitem{Ser1971MR333220}
{\sc J.~Serrin}, {\em A symmetry problem in potential theory}, Arch. Rational
  Mech. Anal., 43 (1971), pp.~304--318.

\bibitem{ShahgholianMR2916825}
{\sc H.~Shahgholian}, {\em Diversifications of {S}errin's and related symmetry
  problems}, Complex Var. Elliptic Equ., 57 (2012), pp.~653--665.

\bibitem{SicbaldiMR4559541}
{\sc P.~Sicbaldi}, {\em A short survey on overdetermined elliptic problems in
  unbounded domains}, in Current trends in analysis, its applications and
  computation, Trends Math. Res. Perspect., Birkh\"auser/Springer, Cham, 2022,
  pp.~451--461.

\bibitem{SirakovMR1808026}
{\sc B.~Sirakov}, {\em Symmetry for exterior elliptic problems and two
  conjectures in potential theory}, Ann. Inst. H. Poincar\'e{} C Anal. Non
  Lin\'eaire, 18 (2001), pp.~135--156.

\bibitem{Sirakov2002}
\leavevmode\vrule height 2pt depth -1.6pt width 23pt, {\em Overdetermined
  Elliptic Problems in Physics}, Springer Netherlands, Dordrecht, 2002,
  pp.~273--295.

\bibitem{TalentiMR3503198}
{\sc G.~Talenti}, {\em The art of rearranging}, Milan J. Math., 84 (2016),
  pp.~105--157.

\bibitem{TemamMR1846644}
{\sc R.~Temam}, {\em Navier-{S}tokes equations}, AMS Chelsea Publishing,
  Providence, RI, 2001.
\newblock Theory and numerical analysis, Reprint of the 1984 edition.

\bibitem{TolksdorfMR727034}
{\sc P.~Tolksdorf}, {\em Regularity for a more general class of quasilinear
  elliptic equations}, J. Differential Equations, 51 (1984), pp.~126--150.

\bibitem{TraizetMR3192039}
{\sc M.~Traizet}, {\em Classification of the solutions to an overdetermined
  elliptic problem in the plane}, Geom. Funct. Anal., 24 (2014), pp.~690--720.

\bibitem{VogelMR1200301}
{\sc A.~L. Vogel}, {\em Symmetry and regularity for general regions having a
  solution to certain overdetermined boundary value problems}, Atti Sem. Mat.
  Fis. Univ. Modena, 40 (1992), pp.~443--484.

\bibitem{WangMR3952780}
{\sc K.~Wang and J.~Wei}, {\em On {S}errin's overdetermined problem and a
  conjecture of {B}erestycki, {C}affarelli and {N}irenberg}, Comm. Partial
  Differential Equations, 44 (2019), pp.~837--858.

\bibitem{Wang2023}
{\sc Y.~Wang and W.~Zhan}, {\em On the rigidity of the 2d incompressible euler
  equations}, arXiv:2307.00197v2.

\bibitem{WeinbergerMR333221}
{\sc H.~F. Weinberger}, {\em Remark on the preceding paper of {S}errin}, Arch.
  Rational Mech. Anal., 43 (1971), pp.~319--320.

\bibitem{WillmsMR1289661}
{\sc N.~B. Willms, G.~M.~L. Gladwell, and D.~Siegel}, {\em Symmetry theorems
  for some overdetermined boundary value problems on ring domains}, Z. Angew.
  Math. Phys., 45 (1994), pp.~556--579.

\end{thebibliography}

\end{document}